\newcommand{\R}{\mathbb{R}}
\newcommand{\N}{\mathbb{N}}
\newcommand{\C}{\mathbb{C}}
\newcommand{\eps}{\varepsilon}
\def\ri{{\rm i}}
\newcommand{\vth}{\vartheta}
\newcommand{\mean}{-\hspace{-1.15em}\int}
\newcommand{\textmean}{-\hspace{-0.85em}\int}
\renewcommand{\Re}{\mathrm{Re}}
\renewcommand{\Im}{\mathrm{Im}}
\newcommand{\spa}{\mathrm{span}}
\newcommand{\loc}{\mathrm{loc}}
\newcommand{\one}{\mathrm{\bf 1}}
\def\calL{\mathcal{L}}
\def\Xint#1{\mathchoice
   {\XXint\displaystyle\textstyle{#1}}%
   {\XXint\textstyle\scriptstyle{#1}}%
   {\XXint\scriptstyle\scriptscriptstyle{#1}}%
   {\XXint\scriptscriptstyle\scriptscriptstyle{#1}}%
   \!\int}
\def\XXint#1#2#3{{\setbox0=\hbox{$#1{#2#3}{\int}$}
     \vcenter{\hbox{$#2#3$}}\kern-.5\wd0}}
\def\meanint{\Xint-}
\newtheorem{theorem}{Theorem}[section]
\newtheorem{definition}[theorem]{Definition}
\newtheorem{lemma}[theorem]{Lemma}
\newtheorem{proposition}[theorem]{Proposition}
\newtheorem{assumption}[theorem]{Assumption}
\newtheorem{remark}[theorem]{Remark}
\numberwithin{equation}{section}
\begin{document}

\thispagestyle{empty}
\begin{center}
  \vspace*{7mm}
  {\Large\bf A Bloch wave numerical scheme for scattering\\[3mm]
    problems in periodic wave-guides}

  \vspace*{5mm} {\large Tom\'a\v{s} Dohnal and Ben
    Schweizer\footnote{Technische Universit\"at Dortmund, Fakult\"at
      f\"ur Mathematik, Vogelpothsweg 87, D-44227 Dortmund,
      Germany. {\tt tomas.dohnal@tu-dortmund.de} and {\tt
        ben.schweizer@tu-dortmund.de} }}
  
  \vspace*{4mm}
  August 1, 2017
\end{center}

\vspace*{2mm}
\begin{center}
   \begin{minipage}[c]{0.86\textwidth}
     {\bf Abstract:} We present a new numerical scheme to solve the
     Helmholtz equation in a wave-guide. We consider a medium that is
     bounded in the $x_2$-direction, unbounded in the $x_1$-direction
     and $\eps$-periodic for large $|x_1|$, allowing different media
     on the left and on the right. We suggest a new numerical method
     that is based on a truncation of the domain and the use of Bloch
     wave ansatz functions in radiation boxes.  We prove the existence
     and a stability estimate for the infinite dimensional version of
     the proposed problem. The scheme is tested on several interfaces
     of homogeneous and periodic media and it is used to investigate
     the effect of negative refraction at the interface of a photonic
     crystal with a positive effective refractive index.

     \vspace*{2mm} {\bf Key-words:} Helmholtz equation, radiation
     condition, locally periodic media, Bloch waves, negative refraction

     \vspace*{2mm}
    {\bf MSC:} 78A40, 35J25
   \end{minipage}
\end{center}

\vspace*{3mm}

\section{Introduction}

Acoustic, elastic, and electromagnetic waves are of quite different
nature, but in many geometries of practical relevance, they can all be
described by the linear wave equation of second order. If we are
interested in the distribution of the wave intensity in a domain
$\Omega\subset \R^n$ after a transitional time, we have to solve the
time harmonic problem, which is the Helmholtz equation
\nocite{Helmholtz-1860}
\begin{equation}
  \label{eq:Waveguide}
  -\nabla\cdot (a \nabla u) = \omega^2 u + f\qquad\text{ in }\Omega\,.
\end{equation}
In this equation, $f: \Omega\to \R$ is a source, the positive
coefficient field $a: \Omega\to \R$ describes the properties of the
medium, we assume that $a$ equals an $\eps$-periodic function $a_+(x)$
on the right of a bounded region and an $\eps$-periodic function
$a_-(x)$ on the left. The periodicity $\eps>0$ and the frequency
$\omega>0$ are given, we consider them as fixed parameters throughout
this work. The aim is to find the solution $u: \Omega \to \C$ to
equation \eqref {eq:Waveguide}.

We are interested in the analysis of wave-guides. Restricting to the
two-dimen\-sional case for simplicity, we consider the infinite strip
$\Omega := \R\times (0,H)$ with the height $H>0$.  It remains to
choose boundary conditions. The main difficulty comes from the
radiation conditions that have to be imposed for $x_1\to \pm\infty$.
In contrast, the analysis is essentially independent of the boundary
condition on the lateral boundary
$(\R\times \{ 0 \}) \cup (\R\times \{ H \})$. To make a choice, we
work with periodicity conditions as in \cite {Lamacz-Schweizer-Bloch}:
values and derivatives coincide at $(x_1,0)\in \R\times \{ 0 \}$ and
$(x_1,H)\in \R\times \{ H \}$.

{\bf Radiation condition.}  At the lateral boundaries, for
$x_1\to \pm\infty$, we have to impose a radiation condition. It is not
an easy task to formulate the radiation condition in a wave-guide.
For a periodic semi-infinite wave-guide, a condition was formulated
and analyzed in \cite {Hoang2011}, similarly, the periodic wave-guide
was analyzed in \cite {FlissJoly2016, JolyLiFliss2006}, a wave-guide
with different coefficients in the two infinite directions was
analyzed in \cite {Lamacz-Schweizer-Bloch}.  The latter publication
provides a the uniqueness result for the suggested radiation
condition. It is this radiation condition which we base our numerical
scheme on. Let us formulate the following fact in order to illustrate
the complexity of the wave-guide problem \eqref {eq:Waveguide}. Let
the coefficients $a$ be real and bounded from below by a positive
number $a_0$.  Furthermore, assume that $a$ is $\eps$-periodic on
$x_1<0$ and on $x_1>0$. Let the source $f:\Omega\to \R$ of class
$L^2(\Omega)$ have a bounded support.  Open question: Does \eqref
{eq:Waveguide} possess a radiating solution $u$?

In the above question we avoided the precise
formulation of the radiation condition -- this is adequate since the
different forms of radiation conditions in a periodic wave-guide are
essentially equivalent.  The form of the radiation condition that was
suggested in \cite {Lamacz-Schweizer-Bloch} can be written as
\begin{equation}
  \label{eq:outgoingright-intro}
  \mean_{RY_\eps} \left|\Pi^+_{<0}(\left\{ u\right\}^+_{R,R})
  \right|^2\to 0\quad  \text{ as }\quad  R\to \infty\,.
\end{equation}
The condition uses the periodicity cell $Y_\eps = (0,\eps)^2$, the
function $\left\{ u\right\}^+_{R,L}$, which is the restriction of $u$
(periodically extended in the vertical direction) to the domain
$\eps (R ,R+L)\times \eps (0,R)$. For large $R$, we hence consider the
solution $u$ on the far right. Note that we use here boxes of width
$\eps L$ at position $\eps R$, while only $L = R$ was considered in
\cite {Lamacz-Schweizer-Bloch}.  The symbol $\Pi^+_{<0}$ denotes the
projection of the argument (which is a function on a square in $\R^2$)
onto the space spanned by left-going Bloch waves, i.e.\,Bloch waves
for which the first component of the Poynting vector is negative.  The
superscript ``$+$'' indicates that the Bloch waves are calculated for
the periodic coefficient $a$ of the right half-cylinder. The symbol
$\mean$ denotes the mean value integral,
$\mean_A f := |A|^{-1} \int_A f$.  A condition analogous to \eqref
{eq:outgoingright-intro} with a projection onto right-going waves must
be imposed on the left.
 
\medskip The aim of this contribution is to introduce and to analyze a
numerical scheme that can be used to solve the wave-guide problem.

\subsection{An approach based on \eqref {eq:outgoingright-intro}}

The numerical problem must be formulated in a bounded
domain. Furthermore, as in many other related approaches, we must
additionally introduce a absorption coefficient $\delta\ge 0$. Our
analytical results will only cover the case $\delta>0$, but practical
experience shows that the numerical scheme works well also for
$\delta=0$.

The truncation is performed with two positive integer parameters
$R,L\in \N$.  We use the inner domain
$\Omega_R := (-R \eps, R\eps) \times (0,H)_\sharp$ with $H=\eps K$,
$K\in \N$, and the extended domain
$\Omega_{R+L} := (-(R+L) \eps, (R+L) \eps) \times (0,H)_\sharp$, the
symbol $\sharp$ indicates that we demand periodicity conditions in
vertical direction.  In the following we suppress the dependencies on
$\delta>0$ and $L>0$, and denote the unknown function on the truncated
domain as $u = u_{R}: \Omega_{R+L} \to \C$. We impose that $u$
satisfies the Helmholtz equation on the inner domain with the
absorption parameter $\delta$:
\begin{equation}
  \label{eq:Waveguide-R-L}
  -\nabla\cdot (a \nabla u) = \omega^2 (1+\ri\delta) u + f
  \qquad\text{ in }\Omega_R\,.
\end{equation}
In order to formulate the radiation condition, we use the positive
parameter $L>0$ and consider the radiation boxes
$W_{R,L}^+ := \eps (R, R+L) \times (0,H)_\sharp$ and
$W_{R,L}^- := \eps (-(R+L), -R) \times (0,H)_\sharp$ as sketched in
Figure \ref {fig:geometry}.
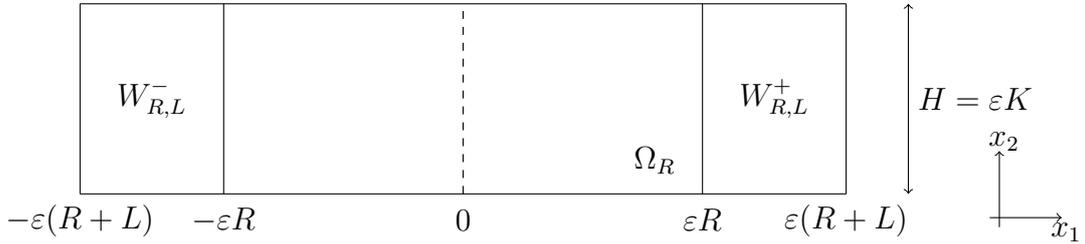
\begin{figure}[ht]
  \centering
  \begin{tikzpicture}[scale = 0.63]
    \draw[-] (-8,0)--(8,0);
    \draw[-] (-8,4)--(8,4);
    \draw[-] (-8,0)--(-8,4);
    \draw[-] (-5,0)--(-5,4);
    \draw[-] (8,0)--(8,4);
    \draw[-] (5,0)--(5,4);

    \draw[dashed] (0,0)--(0,4);


    \node[] at (8,-.55) {$\eps (R+L)$};
    \node[] at (5,-.55) {$\eps R$};
    \node[] at (-8,-.55) {$-\eps (R+L)$};
    \node[] at (-5,-.55) {$-\eps R$};
    \node[] at (0, -.55) {$0$};

    \draw[<->] (9.3,0)--(9.3,4);
    \node[] at (10.7, 2) {$H = \eps K$};

    \node[] at (6.5,2) {$W_{R,L}^+$};
    \node[] at (-6.5,2) {$W_{R,L}^-$};
    \node[] at (4,0.7) {$\Omega_{R}$};

    \draw[->] (11,-0.5)--(12.5,-0.5);
    \node[] at (12.6, -.8) {$x_1$};

    \draw[->] (11.2,-0.7)--(11.2,0.9);
    \node[] at (11.3, 1.1) {$x_2$};
\end{tikzpicture}
  \caption{Geometry of the truncated domain}
  \label{fig:geometry}
\end{figure}
The restrictions of a function $u:\Omega_{R+L}\to \C$ to these two
rectangles are denoted as $\left\{ u\right\}^+_{R,L}$ and $\left\{
  u\right\}^-_{R,L}$. More precisely, we additionally shift the lower
left corner to the origin and set, for $x_1\in [0, \eps L)$ and
$x_2\in [0, H)$,
\begin{equation*}
  \left\{ u\right\}^+_{R,L}(x_1, x_2) := u(\eps R + x_1, x_2)\,,\quad
  \left\{ u\right\}^-_{R,L}(x_1, x_2) := u(-\eps (R+L) + x_1, x_2)\,.
\end{equation*}
We emphasize that the radiation boxes have width $\eps L$ and are
positioned at $\pm \eps R$, while we restricted ourselves to $L = R$ in
\cite {Lamacz-Schweizer-Bloch} to simplify notations.  The idea is now
to impose \eqref {eq:outgoingright-intro} and its counterpart on the
left hand side in a strong form at a finite distance; in this first
attempt we demand
\begin{equation}
  \label{eq:outgoing-R-L}
  \Pi^+_{<0}(\left\{ u\right\}^+_{R,L}) = 0\qquad\text{and}\qquad
  \Pi^-_{>0}(\left\{ u\right\}^-_{R,L}) = 0\,.
\end{equation}
The projections are defined below in Definition \ref {def:projection}.
Condition \eqref {eq:outgoing-R-L} expresses that, in the right
radiation box $W_{R,L}^+$, the solution does not contain left-going
waves, and, in the left radiation box $W_{R,L}^-$, the solution does
not contain right-going waves.

\paragraph{Coupling conditions across interfaces.}

We finally have to demand conditions along the two interior interfaces
$\Gamma_R^+ := \overline{\Omega_R} \cap \overline{W_{R,L}^+} = \{\eps
R\} \times (0,H)_\sharp$ and
$\Gamma_R^- := \overline{\Omega_R} \cap \overline{W_{R,L}^-} = \{-\eps
R\} \times (0,H)_\sharp$. We impose on $u$ the weak continuity
condition
\begin{equation}
  u \in H^1(\Omega_{R+L})\,.\label{eq:weak-cont}
\end{equation}

A second condition is needed to replace the continuity of the flux,
which may be expressed as
\begin{equation}
  \label{eq:flux-R-L}
  \left[ e_1\cdot a\nabla u \right]_{\Gamma_R^\pm} = 0\,,
\end{equation}
where the bracket $[ . ]_{\Gamma_R^\pm}$ denotes the jump of a
function across the interface $\Gamma_R^\pm$.

Let us assume that the problem parameters $a$, $\omega$, $\delta$, and
the truncation parameters $R$ and $L$ are fixed. Our first attempt to
define a truncated problem is the following.
\begin{quote}
  (P$_0$): Given $f$, find $u$ that satisfies \eqref
  {eq:Waveguide-R-L}--\eqref {eq:flux-R-L}.
\end{quote} {\em Warning:} Problem (P$_0$) is not a useful truncated
problem since it does not contain a partial differential equation in
the boxes $W_{R,L}^\pm$.

The main result of this work is the formulation of a more useful
problem (P).  Problem (P) will be defined with a function space $V$,
which strengthens \eqref{eq:outgoing-R-L} and with a bilinear form
$\beta$, which encodes a weaker version of \eqref{eq:flux-R-L}, see
\eqref {eq:beta-problem}. Theorem \ref {thm:existence} provides the
solvability of this problem and a stability estimate.  The numerical
results presented in Section \ref {sec.numerics} are obtained using
problem (P).

\subsection{Literature}

An outgoing wave condition for homogeneous media was suggested by
Sommerfeld in 1912, today it is the undoubted radiation condition for
the full space problem. If, for numerical purpose, the domain is
truncated, the radiation condition must be replaced by a condition at
a finite distance. One of the ideas is to use a boundary condition that
exploits a representation of the solution outside the truncated domain
(integral representation or Dirichlet-to-Neumann map). Another idea is
to introduce an absorbing layer that surrounds the truncated domain
(perfectly matched layer technique).

\paragraph{Radiation conditions in periodic media.}

The two sketched ideas cannot easily be adapted to treat periodic
media: Integral representations are not available and a non-reflecting
boundary condition is not exact, since a non-homogeneous medium always
reflects waves in part. The derivation of perfectly matched layers
typically requires an explicit representation of propagating modes,
which is not available in periodic media.

Outgoing wave conditions in periodic wave-guides have been introduced
and analyzed e.g.\,in \cite {FlissJoly2016, Hoang2011}. Loosely
speaking, a radiating solution is a function that consists, at large
distances from the origin and up to small errors, of outgoing Bloch
waves. The two contributions \cite {FlissJoly2016, Hoang2011} treat
the (globally) periodic wave-guide problem and the periodic
half-wave-guide problem, respectively, and they contain existence
results that are based on a limiting absorption principle. A slightly
different radiation condition for the locally periodic wave-guide was
suggested in \cite {Lamacz-Schweizer-Bloch}.  Regarding further
results on limiting absorption principles we mention \cite
{JolyLiFliss2006, Radosz2015}.

Other conditions are the ``modal radiation condition'', formulated in
Definition 2.4 of \cite{Bonnet-Ben-etal-SIAP2009} and the ``pole
condition'' of \cite {Hohage-Schmidt-Z-2003}.  We mention \cite
{Nazarov2014} and the references therein for other approaches to
radiation conditions.

Regarding the general treatment of waves in periodic media (e.g.\,in
photonic crystals) we refer to \cite {PhotonicCrystals-book,
  Kuchment-2001}. Regarding the tool of Bloch expansions and Bloch
measures, we refer to \cite{Allaire-Conca-1998}.

\paragraph{Numerical treatment of radiation conditions.}

The numerical treatment of exact boundary conditions in an
inhomogeneous material was considered in \cite{Fliss-Joly-2009},
extending the approach of \cite {JolyLiFliss2006} to a material that
is inhomogeneous in two directions. Their approach uses
Dirichlet-to-Neumann maps that are defined by half-infinite wave guide
problems.  The authors provide an explanation why none of the
classical approaches to implement outgoing wave conditions at finite
distance (local radiation condition, perfectly matched layers,
standard Dirichlet-to-Neumann maps) can easily be adapted to periodic
media. We note that, just as in the contribution at hand, the analysis
of \cite{Fliss-Joly-2009} is restricted to the case with positive
losses.

The method was developed further to a numerical scheme in
\cite{Joly-Fliss-2012}. The ideas were used in \cite{Fliss-2013} to
the study of line defects in a periodic photonic crystal; these
defects have been analyzed also in \cite{HoangRadosz-2014} with the
result that a line defect cannot support finite energy modes (bound
states).  For extensions of the numerical scheme to Robin type
boundary conditions see \cite {FlissKlindworthSchmidt2015}.

\vspace*{3mm} {\em Enriched finite elements.} Our numerical method is
a Galerkin method in which we use two different types of ansatz
functions: Standard piecewise linear hat functions in the interior of
the domain and Bloch waves in the radiation boxes. The approach is
reminiscent of enriched finite element methods, see
e.g.\,\cite{KOH2011, SBH2006}.

\paragraph{Negative refraction.}

Photonic crystals can exhibit astonishing behavior -- one of them is
negative refraction. When a planar wave hits the interface between
free space and photonic crystal, then one part of the wave is
reflected, another part generates waves inside the photonic
crystal. It has been observed that the waves in the crystal can travel
in a direction that corresponds to negative refraction.

There are two explanations for this effect. The first one is based on
a study of a homogenized equivalent medium that replaces the photonic
crystal. This replacement provides a good approximation if the
periodicity is small compared to the wave-length. We refer to Figure
\ref {F:comp_hom_om_sm} for the numerical results of our method in
this case. Indeed, the results show a good agreement between the
solution with the periodic medium on the right and the solution with
the homogenized medium. If the homogenized medium happens to have a
negative index of refraction, then negative refraction is visible in
the homogenized problem and also in the periodic problem (not the case
in Figure \ref {F:comp_hom_om_sm}).  For the underlying idea we
mention \cite{Pendry2000}, for mathematical justifications we refer to
\cite{BFe2, BouchitteSchweizer-Max, Lamacz-Schweizer-Max,
  Lamacz-Schweizer-Neg}.  In \cite {EfrosPokrovsky-SolidState-2004,
  Pokrovsky2003333} the negative refraction effect is explained in the
spirit of negative index materials.

The second explanation of the effect of negative refraction, observed
and outlined in \cite {PhysRevB.65.201104}. The main point of \cite
{PhysRevB.65.201104} is that negative refraction can occur between two
materials with positive index (where no negative refraction occurs in
the homogenization limit). The analysis is purely based on the study
of the band structure of the left and right medium.  Figure \ref
{F:comp_hom_om_lg} illustrates this effect: The incoming wave from the
left travels north-east. For the homogenized material in the right
half (results of the bottom figure), the transmitted wave also travels
north-east. In contrast, for wave-length and periodicity of comparable
size (top figure), the transmitted wave travels south-east. Both \cite
{Lamacz-Schweizer-Bloch} and the work at hand support the
interpretation of \cite {PhysRevB.65.201104}: Negative refraction is
possible in positive index materials.  We emphasize that we use here
the same photonic crystal that was also used in \cite
{EfrosPokrovsky-SolidState-2004} and \cite {PhysRevB.65.201104}. This
periodic medium does not have a negative effective index in the sense
of homogenization.

\section{Bloch expansion formalism and problem (P)}\label{S:Bloch}

We have to fix the notations of the Bloch expansion formalism. The
formalism allows us, on the one hand, to define the projections that
have already been used in condition \eqref {eq:outgoing-R-L}. On the
other hand, we will be able to formulate the modified problem
(P), which we suggest as a useful truncated problem.

We assume in the following that the medium is $\eps$-periodic on the
right and on the left. More precisely, for two $Y_\eps$-periodic
functions $a_+$ and $a_-$, we assume $a(x) = a_+(x)$ for
$x_1 \ge \eps R/2$ and $a(x) = a_-(x)$ for $x_1 \le - \eps R/2$. We
work in two space dimensions, but the methods are not restricted to
this case.

\subsection {Bloch formalism}

For $\eps>0$ let $Y_\eps = \eps (0,1)^2$ be the periodicity cell and
let $H = \eps K$ with $K\in \N$ be the height of the domain $\Omega =
\R\times (0,H)_\sharp$.  We use the finite index set $Q_{K} :=
\{0,\frac{1}{K}, \frac{2}{K}, \dots, \frac{K-1}{K}\}$ and employ a
Pre-Bloch expansion in the vertical direction: Any function $u\in
L^2_\loc(\R\times (0,H);\C)$ can be expanded in periodic functions
with phase-shifts: There is a unique family of $\eps$-periodic
functions $\Phi_{j_2}(x_1, \cdot)$ such that, in the sense of
$L^2_\loc( \R\times (0,H); \C)$,
\begin{equation}
  \label{eq:discrete-u-1}
  u(x_1, x_2) 
  = \sum_{j_2\in Q_{K}} \Phi_{j_2}(x_1, x_2)\, e^{2\pi \ri j_2 x_2/\eps}\,.
\end{equation}
The analogous result holds when we expand a function $u\in L^2((0,\eps
L)\times (0, \eps K); \C)$ in both directions $x_1$ and $x_2$. In this
case one obtains for $j= (j_1, j_2)\in Q_L\times Q_K$ functions
$\Phi_j = \Phi_j(x_1, x_2)$ that are $\eps$-periodic in both
directions.

We regard the $\eps$-periodic functions $\Phi_{j}$ for
$j = (j_1, j_2)$ as maps $Y_\eps \to \C$ and expand them in terms of
eigenfunctions of the operator
\begin{equation}\label{eq:L_j-operator}
  \calL_j^\pm
  := -\left(\nabla + 2\pi \ri j/\eps\right) 
  \cdot \left(a_\pm(x) \left(\nabla+ 2\pi \ri j/\eps\right)\right)\,,
\end{equation}
which is defined on $H^1_\sharp(Y_\eps; \C)$.  The definition of the
operator $\calL_j^\pm$ is motivated by the following fact: If
$\Psi_{j}^\pm$ is an eigenfunction of $\calL_j^\pm$ with eigenvalue
$\mu_j^\pm$, then $\Psi_{j}^\pm e^{2\pi \ri j\cdot x / \eps}$ is a
solution to the Helmholtz equation \eqref {eq:Waveguide} with
$a = a_\pm$ and $\omega^2 = \mu_j^\pm$ and $f=0$.

\begin{definition}[Bloch eigenfunctions]
  \label{def:Bloch-eigenfunctions}
  For $\eps>0$ and $j\in [0,1]^2$ we denote by $\left(\Psi^\pm_{j,m}
  \right)_{m\in\N_0}$ an orthogonal family of eigenfunctions to the
  symmetric operator $\calL_j^\pm$, ordered to have
  $\mu_{m+1}^\pm(j)\geq \mu_m^\pm(j)$ for all $m\in\N_0$. We normalize
  with $\textmean_{Y_\eps} | \Psi^\pm_{j,m} |^2 = 1$.
\end{definition}

The subsequent lemma is a classical result on Bloch expansions, see
e.g.\,\cite{PBL-1978}.

\begin{lemma}[Bloch expansion]\label{lem:Bloch-expansion}
  For $L, K \in\N$ and $\eps>0$ we consider the rectangle
  $W = (0,\eps L)\times (0,\eps K)$ and $u\in L^2(W; \C)$. For both
  eigenfunction families $(\Psi^+_{j, m})_{j,m}$ and
  $(\Psi^-_{j, m})_{j,m}$ the function $u$ possesses a unique
  expansion with coefficients $\alpha_{j,m}^\pm \in\C$ and convergence
  in $L^2(W;\C)$:
  \begin{equation}\label{eq:bloch-exp-1}
    u(x) = \sum_{j\in Q_{L}\times Q_{K}} \sum_{m=0}^\infty
    \alpha^\pm_{j, m}  \Psi^\pm_{j, m}(x)\, e^{2\pi \ri j\cdot x / \eps}\,.
  \end{equation}
\end{lemma}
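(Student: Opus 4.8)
The plan is to establish the Bloch expansion \eqref{eq:bloch-exp-1} by reducing it to a completeness statement for the eigenfunctions of each operator $\calL_j^\pm$ combined with the orthogonality structure of the phase factors. First I would recall that, by the Pre-Bloch expansion in both directions (the two-directional analogue of \eqref{eq:discrete-u-1} already discussed in the text), any $u\in L^2(W;\C)$ decomposes uniquely as
\begin{equation*}
  u(x) = \sum_{j\in Q_L\times Q_K} \Phi_j(x)\, e^{2\pi\ri j\cdot x/\eps}\,,
\end{equation*}
where each $\Phi_j$ is $\eps$-periodic in both variables, i.e.\ $\Phi_j\in L^2_\sharp(Y_\eps;\C)$. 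This step is the standard discrete Fourier/Floquet decomposition: the family $\{e^{2\pi\ri j\cdot x/\eps}\}_{j\in Q_L\times Q_K}$ separates the index $j$ because, on $W$, the characters for distinct $j$ differ by a genuinely oscillating factor, so that extracting $\Phi_j$ amounts to an averaging over the translated cells that make up $W$. I would verify the $L^2(W)$-orthogonality that underlies uniqueness: writing $W$ as the disjoint union of the $LK$ translated cells $Y_\eps + \eps(k_1,k_2)$ and using $\eps$-periodicity of the $\Phi_j$, the cross terms integrate to zero by summing a finite geometric series in the roots of unity $e^{2\pi\ri j_1 k_1/L}$ and $e^{2\pi\ri j_2 k_2/K}$.

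The second step is purely at the level of a single cell. For each fixed $j$, the operator $\calL_j^\pm$ defined in \eqref{eq:L_j-operator} is a self-adjoint, elliptic operator on $H^1_\sharp(Y_\eps;\C)$ with the lower bound on $a_\pm$ and compact resolvent, so by the spectral theorem its normalized eigenfunctions $(\Psi^\pm_{j,m})_{m\in\N_0}$ from Definition \ref{def:Bloch-eigenfunctions} form a complete orthogonal basis of $L^2_\sharp(Y_\eps;\C)$. Hence each periodic coefficient admits the convergent expansion
\begin{equation*}
  \Phi_j(x) = \sum_{m=0}^\infty \alpha^\pm_{j,m}\,\Psi^\pm_{j,m}(x)\,,
  \qquad
  \alpha^\pm_{j,m} = \mean_{Y_\eps} \Phi_j\,\overline{\Psi^\pm_{j,m}}\,,
\end{equation*}
with convergence in $L^2(Y_\eps;\C)$ and the Parseval identity $\mean_{Y_\eps}|\Phi_j|^2 = \sum_m |\alpha^\pm_{j,m}|^2$. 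Substituting this into the first-step decomposition yields \eqref{eq:bloch-exp-1}. Uniqueness of the coefficients follows by combining the uniqueness of the $\Phi_j$ (step one) with the uniqueness of the single-cell eigenexpansion (orthonormality of the $\Psi^\pm_{j,m}$).

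The remaining point, which I expect to be the main technical obstacle, is to pass from cell-wise $L^2(Y_\eps)$-convergence to genuine convergence in $L^2(W;\C)$ of the full double sum, and to confirm that the finitely many phase-factor pieces recombine into a single norm. Because the index set in $j$ is finite ($|Q_L\times Q_K| = LK$), there is no issue of summing infinitely many $j$; the only infinite sum is in $m$, and the phase factor $e^{2\pi\ri j\cdot x/\eps}$ has unit modulus. Thus I would argue that the global $L^2(W)$-norm splits as an orthogonal sum over the cells and over $j$, giving
\begin{equation*}
  \mean_W |u|^2 = \frac{1}{LK}\sum_{j\in Q_L\times Q_K}\ \sum_{m=0}^\infty |\alpha^\pm_{j,m}|^2\,,
\end{equation*}
so that convergence of each single-cell series together with finiteness of the $j$-sum yields convergence in $L^2(W;\C)$. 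The care needed here is bookkeeping the normalization factors $|W| = \eps^2 LK$ and ensuring the averaged orthogonality of the characters survives the restriction from the full strip to the finite box $W$; with the cell-decomposition identity above this is routine but must be done explicitly to close the argument.
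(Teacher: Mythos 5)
Your argument is essentially sound, but there is no in-paper proof to compare it against: the paper states Lemma~\ref{lem:Bloch-expansion} as a classical result and only cites \cite{PBL-1978}. What you have written is the standard proof of that classical fact, and its two-step structure is exactly what the cited literature does: first the discrete Floquet decomposition $u=\sum_{j\in Q_L\times Q_K}\Phi_j\,e^{2\pi\ri j\cdot x/\eps}$ over the $LK$ translated cells, with uniqueness and orthogonality coming from the finite geometric sums of roots of unity (the paper already records this step as the two-directional analogue of \eqref{eq:discrete-u-1}, and the character orthogonality is Lemma~A.1 of \cite{Lamacz-Schweizer-Bloch}); second, the fiberwise expansion of each $\eps$-periodic $\Phi_j$ in the eigenbasis of the self-adjoint operator $\calL_j^\pm$ with compact resolvent. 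Your treatment of the convergence of the double series is also fine: the $j$-sum is finite, and since the phases are unimodular and $\Phi_j$ is $\eps$-periodic, $\|\Phi_j e^{2\pi\ri j\cdot x/\eps}\|_{L^2(W)}=\sqrt{LK}\,\|\Phi_j\|_{L^2(Y_\eps)}$, so cell-wise convergence upgrades to convergence in $L^2(W)$. The one concrete error is the final Parseval identity: the mean value of an $\eps$-periodic function over $W$ equals its mean value over a single cell, so the correct statement is $\mean_W|u|^2=\sum_{j}\sum_{m}|\alpha^\pm_{j,m}|^2$, equivalently $\|u\|^2_{L^2(W)}=\eps^2LK\sum_{\lambda}|\alpha^\pm_\lambda|^2$ as in \eqref{eq:L2normexpansion}; your prefactor $1/(LK)$ is spurious. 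This slip does not affect the existence, uniqueness, or convergence of the expansion, but it would matter if carried into the quantitative estimates such as \eqref{eq:L2-intermediate-834}, so it should be corrected.
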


We use the index-set $I_{L,K} := \{(j,m) | j\in Q_{L}\times Q_{K},\
m\in\N_0\}$ and multi-indices $\lambda = (j,m)\in I_{L,K}$, and define
for $x\in (0, \eps L)\times (0, \eps K)$
\begin{align}
  \label{eq:abbreviationUlambda}
  U^\pm_\lambda(x) := \Psi^\pm_\lambda (x)\, e^{2\pi \ri j\cdot x/\eps}\,.
\end{align}
With this notation, \eqref {eq:bloch-exp-1} simplifies to
\begin{equation}
  \label{eq:discrete-v-eps-Bloch}
  u(x) = \sum_{\lambda \in I_{L,K}} \alpha^\pm_\lambda U^\pm_\lambda (x)\,.
\end{equation}

We note that the Bloch basis functions inherit orthogonality
properties from the eigenfunctions $\Psi^\pm_{j,m}$.  Given
$L, K\in \N$, we calculate on $W = (0,\eps L)\times (0, \eps K)$ for
$\lambda = (j,m)$ and $\tilde\lambda = (\tilde j,\tilde m)$,
$\lambda, \tilde{\lambda} \in I_{L,K}$:
\begin{align*}
  \int_W \bar U^+_\lambda(x)  U^+_{\tilde\lambda}(x) \, dx
  = \int_W \Psi^+_\lambda (x) \Psi^+_{\tilde\lambda} (x) 
  e^{2\pi \ri (\tilde j - j) \cdot x/\eps}\, dx 
  = (\eps^2 L K)\ \delta_{\lambda, \tilde\lambda}\,.
\end{align*}
Indeed, for $j\neq \tilde j$, the expression vanishes by Lemma A.1 of
\cite {Lamacz-Schweizer-Bloch}. For $j= \tilde j$ and
$m\neq \tilde m$, the expression vanishes by orthogonality of the
different eigenfunctions $\Psi^+_\lambda$ to one $j$. In the remaining
case $\lambda = \tilde\lambda$, the statement is a consequence of the
normalization of $\Psi^+_\lambda$.  The same applies to
$(U^+_\lambda)_\lambda$.  Due to the $L^2(W)$-orthogonality, we have
the Plancherel formula
\begin{align}
  \label{eq:L2normexpansion}
  \|u\|^2_{L^2(W)} = \eps^2 LK \sum_{\lambda \in
    I_{L,K}}|\alpha^\pm_\lambda|^2\,.
\end{align}

\subsubsection*{Poynting numbers, index sets and projections}

We study a fixed index
$\lambda \in I := \{ \lambda = (j,m) | j\in [0,1]^2, m\in \N_0\}$. To
the index $\lambda$ we associate two Bloch waves, $U_\lambda^+$ and
$U_\lambda^-$ for the right domain and the left domain,
respectively. The Bloch waves $U_\lambda^\pm$ can transport energy to
the left or to the right; we now introduce the Poynting numbers
$P^\pm_\lambda$ which indicate the direction of energy transport.  The
sign of $P^+_\lambda$ coincides with the sign of the first component
of the group velocity, see Theorem 3 in \cite{FlissJoly2016} and the
explanation in Section 3.1. of \cite{Lamacz-Schweizer-Bloch}.  We set
\begin{equation}
  \label{eq:Poynting-def}
  P^\pm_\lambda := \Im\, \meanint_{Y_\eps} \bar U^\pm_\lambda (x)\,
  e_1\cdot \left[a_\pm(x)\nabla U^\pm_\lambda(x)\right]\, dx\,.
\end{equation}

\begin{definition}[Projections]\label{def:projection}
  Let $u\in L^2(W; \C)$ be a function on the rectangle $W = (0,\eps
  L)\times (0,\eps K)$ with the discrete Bloch expansion \eqref
  {eq:discrete-v-eps-Bloch}.  We define the projections $\Pi^\pm_{>0}$
  onto right-going Bloch waves by
  \begin{equation}\label{eq:projection}
    \left( \Pi^\pm_{>0} u \right) (x) 
    := \sum_{\stackrel{\lambda\in I_{L,K}}{P_\lambda^\pm>0}}
    \alpha^\pm_\lambda U^\pm_\lambda(x)\,.
  \end{equation}
  Projections $\Pi^\pm_{<0}$ onto left-going Bloch waves are defined
  accordingly.
\end{definition}

\paragraph{Sesquilinear forms $b^\pm$.}

We consider two functions $u\in L^2(W,\C)$ and $v\in H^1(W,\C)$ on the
rectangle $W = (0,\eps L)\times (0,\eps K)$. Two energy-flux
sesquilinear forms are defined by
\begin{equation}
  \label{eq:b-alt}
  b^\pm(u,v) :=\meanint_{W}\bar u(x)\,
    e_1\cdot\left[a_\pm(x)\nabla v(x)\right]\,dx\,.
\end{equation}
The connection to the Poynting number $P^\pm_\lambda$ of \eqref
{eq:Poynting-def} is expressed by
\begin{align}
  \label{eq:relationbS-2}
  P^\pm_\lambda 
  = \Im\, b^\pm\left(U^\pm_\lambda, U^\pm_\lambda\right)\,.
\end{align}
The following lemma has been shown in \cite {Lamacz-Schweizer-Bloch}
for $L = K$, the proof in the general case needs only notational
changes.

\begin{lemma}[Orthogonality property of $b^\pm$]
  \label{lem:orthogonalwavenumber}
  Given $L, K \in \N$, let $\lambda, \tilde\lambda\in I_{L,K}$ be two
  indices with $\lambda=(j,m)$, $\tilde\lambda=(\tilde j,\tilde m)$
  and $j\neq \tilde j$.  Then the basis functions $U^\pm_\lambda$ and
  $U^\pm_{\tilde\lambda}$ of \eqref {eq:abbreviationUlambda} satisfy
  \begin{align}
    \label{eq:orthogonalwavenumber}
    b^\pm(U_\lambda^\pm, U_{\tilde\lambda}^\pm) = 0 \,.
  \end{align}
\end{lemma}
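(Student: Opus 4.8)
The plan is to reduce the claim to the orthogonality of pure phase factors over the rectangle $W$, which is exactly the content of Lemma A.1 of \cite{Lamacz-Schweizer-Bloch} already invoked for the $L^2(W)$-orthogonality of the $U^\pm_\lambda$. The whole proof is a direct computation; the only structural input is that, apart from a single phase factor of the form $e^{2\pi\ri(\tilde j-j)\cdot x/\eps}$, the integrand is $\eps$-periodic, and that $W$ contains an integer number of periods in each coordinate direction because $j,\tilde j\in Q_L\times Q_K$.

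First I would insert the definitions. Writing $U^\pm_{\tilde\lambda}(x)=\Psi^\pm_{\tilde\lambda}(x)\,e^{2\pi\ri\tilde j\cdot x/\eps}$ and using the product rule for the gradient gives
\[
  \nabla U^\pm_{\tilde\lambda}(x)
  = e^{2\pi\ri\tilde j\cdot x/\eps}\left(\nabla+\tfrac{2\pi\ri\tilde j}{\eps}\right)\Psi^\pm_{\tilde\lambda}(x)\,.
\]
Combining this with $\overline{U^\pm_\lambda(x)}=\overline{\Psi^\pm_\lambda(x)}\,e^{-2\pi\ri j\cdot x/\eps}$ in the definition \eqref{eq:b-alt}, the two eigenfunction phases merge into the single factor $e^{2\pi\ri(\tilde j-j)\cdot x/\eps}$, so that
\[
  b^\pm(U_\lambda^\pm,U_{\tilde\lambda}^\pm)
  = \meanint_W e^{2\pi\ri(\tilde j-j)\cdot x/\eps}\,g(x)\,dx\,,
  \qquad
  g(x):=\overline{\Psi^\pm_\lambda(x)}\;e_1\cdot a_\pm(x)\left(\nabla+\tfrac{2\pi\ri\tilde j}{\eps}\right)\Psi^\pm_{\tilde\lambda}(x)\,.
\]
The decisive observation is then that $g$ is $\eps$-periodic in both directions: the eigenfunctions $\Psi^\pm_\mu$ and the coefficient $a_\pm$ are $\eps$-periodic by construction, and the shifted gradient $(\nabla+2\pi\ri\tilde j/\eps)$ maps $\eps$-periodic functions to $\eps$-periodic ones, so products and the selection of the first component via $e_1\cdot$ preserve periodicity.

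Next I would expand $g$ in its Fourier series $g(x)=\sum_{n\in\Z^2}c_n\,e^{2\pi\ri n\cdot x/\eps}$, reducing the claim to the elementary integrals $\meanint_W e^{2\pi\ri(\tilde j-j+n)\cdot x/\eps}\,dx$. Since $W$ factorizes, each of these splits into $\tfrac1L\int_0^L e^{2\pi\ri(\tilde j_1-j_1+n_1)y}\,dy$ times $\tfrac1K\int_0^K e^{2\pi\ri(\tilde j_2-j_2+n_2)y}\,dy$. This is the only place where $j\neq\tilde j$ enters: because $j_1,\tilde j_1\in Q_L$, the difference $\tilde j_1-j_1$ is an integer multiple of $1/L$ lying in $(-1,1)$, hence $(\tilde j_1-j_1+n_1)L\in\Z$ for every $n_1\in\Z$, so the $x_1$-integral equals $1$ when $\tilde j_1-j_1+n_1=0$ and vanishes otherwise, and similarly for $x_2$ with $K$ in place of $L$. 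A surviving Fourier mode would therefore require $n=-(\tilde j-j)\in\Z^2$, which is impossible whenever $j_1\neq\tilde j_1$ or $j_2\neq\tilde j_2$. Since $j\neq\tilde j$ forces at least one of these, no mode contributes and the integral vanishes, establishing \eqref{eq:orthogonalwavenumber}.

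I expect the only genuine subtlety to be the gradient acting on the exponential in the second argument: one must retain the extra zeroth-order term $2\pi\ri\tilde j/\eps$. This is precisely the step where a naive argument might break, but since that term is again $\eps$-periodic times the \emph{same} phase factor $e^{2\pi\ri\tilde j\cdot x/\eps}$, it is absorbed into $g$ and does not affect the cancellation. Everything else is the bookkeeping already carried out for the case $L=K$ in \cite{Lamacz-Schweizer-Bloch}, confirming that indeed only notational changes are needed.
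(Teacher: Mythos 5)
Your proof is correct and follows the route the paper intends: the paper gives no proof of this lemma but defers to the $L=K$ case in \cite{Lamacz-Schweizer-Bloch}, whose mechanism is precisely the one you reconstruct --- factor out the single phase $e^{2\pi\ri(\tilde j-j)\cdot x/\eps}$, observe that the remaining integrand (including the zeroth-order term $2\pi\ri\tilde j/\eps$ produced by the gradient) is $\eps$-periodic, and use that $W$ contains an integer number of cells while $\tilde j-j\notin\Z^2$, which is exactly the content of the Lemma~A.1 the paper already invokes for the $L^2(W)$-orthogonality. The only cosmetic difference is your detour through a Fourier expansion of $g$; one can equivalently split $W$ into $L\times K$ translates of $Y_\eps$ and sum the resulting geometric series of phases, which avoids any convergence discussion when $g$ is merely integrable.
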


\subsection{Problem (P)}

We can now formulate the truncated problem (P). We propose this
problem on a bounded domain as a replacement of the Helmholtz equation
with a radiation condition.

Our aim is to modify problem (P$_0$), which was defined by equations
\eqref {eq:Waveguide-R-L}--\eqref {eq:flux-R-L}. In problem (P) we
keep equation \eqref {eq:Waveguide-R-L}. Equation \eqref
{eq:outgoing-R-L} is strengthened, see \eqref {eq:outgoing-R-L-eta}
below.  The continuity condition \eqref {eq:weak-cont} is kept and the
flux condition \eqref {eq:flux-R-L} is weakened, see
\eqref{eq:flux-R-L-eta}.

The approximate problem is designed by choosing two index sets
$I^+ \subset I_{L,K}$ and $I^- \subset I_{L,K}$. We recall that every
element $\lambda \in I^\pm$ is of the form $\lambda = (j,m)$ with
$j_1\in Q_L \subset [0,1]$, $j_2\in Q_K \subset [0,1]$, $m\in \N$.
The index sets $I^\pm$ are chosen with the property
\begin{equation}\label{eq:Ipm-poynt}
  \lambda\in I^+ \Rightarrow P^+_\lambda > 0,
  \quad \lambda\in I^- \Rightarrow P^-_\lambda < 0,
\end{equation}
i.e. Bloch waves of the right radiation box travel to the right and
Bloch waves of the left radiation box travel to the left. Moreover, in
the numerics $I^\pm$ are finite. For given sets $I^\pm$ we define the
space
\begin{equation}
  \label{eq:X_eta}
  X^\pm :=  \spa \{ U^\pm_\lambda \,|\,  \lambda \in I^\pm \}\,.
\end{equation}

In order to approximate the Helmholtz equation on the unbounded domain
we first choose $R,L,\delta > 0$, and the two index sets
$I^\pm$. Given these parameters, the aim is to find $u: \Omega_{R+L}
\to \C$ that satisfies the following four conditions: (i) The
Helmholtz equation \eqref {eq:Waveguide-R-L} on $\Omega_R$. (ii) The
radiation condition \eqref {eq:outgoing-R-L} in the strengthened form
\begin{equation}
  \label{eq:outgoing-R-L-eta}
  \left\{ u\right\}^\pm_{R,L} \in X^\pm \,.
\end{equation}
We recall that, when $I^\pm$ consists only of indices of outgoing
waves, the projections \eqref {eq:outgoing-R-L} onto incoming waves
automatically vanish.  (iii) The weak continuity condition \eqref
{eq:weak-cont}.  (iv) A continuity condition that replaces \eqref
{eq:flux-R-L}; we obtain this condition as the natural interface
condition in a variational formulation of the problem.

Problem (P) will be made precise in Definition \ref {def:problem-P}
below. Essentially, the aim is to find $u$ that satisfies (i)--(iv).

\subsubsection*{The variational formulation}

In order to impose conditions (ii) and (iii), we seek $u$ in the
infinite dimensional function space
\begin{equation}
  \label{eq:V_eta}
  V := \left\{ u\in H^1(\Omega_{R+L}) \left|
      \phantom{\int}\!\!\!\!  
      u \text{ vertically periodic, }
      \left\{ u\right\}^+_{R,L} \in
      X^+, \left\{ u \right\}^-_{R,L} \in X^-
    \right. \right\}\,.
\end{equation}
Note that $V$ depends on the choice of the index sets $I^\pm$.

We now formulate (i) (the Helmholtz equation \eqref
{eq:Waveguide-R-L}) in a weak form, and, at the same time, encode the
flux condition (iv). In order to make integration by parts possible,
we introduce the special cut-off function $\vth : \R\to [0,1]$,
defined by
\begin{align*}
  \vth(\xi) :=
  \begin{cases}
    0 &\text{for } |\xi| \ge \eps(R+L),\\
    1 &\text{for } |\xi| \le \eps R,\\
    (\eps (R+L) - |\xi|)/(\eps L)\quad &\text{else,}
  \end{cases}
\end{align*}
compare Figure \ref {fig:cut-off}.  We regard $\vth$ also as a
function on two-dimensional domains such as $\Omega_{R+L}$ by setting
$\vth(x_1, x_2) = \vth(x_1)$.
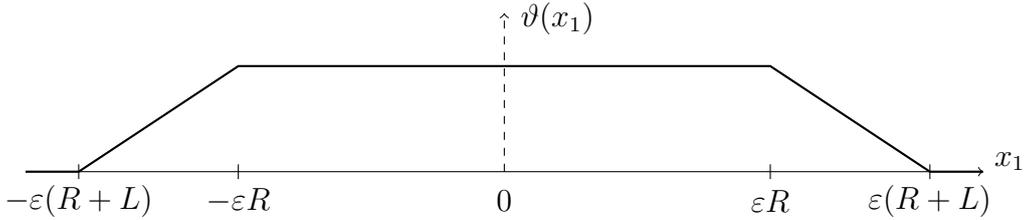
\begin{figure}[ht]
  \centering
  \begin{tikzpicture}[scale = 0.7]
    \draw[->] (-9,0)--(9,0);
    \draw[thick] (-9,0)--(-8,0)--(-5,2)--(5,2)--(8,0)--(9,0);

    \draw[-] (-8,-0.14)--(-8,0.14);
    \draw[-] (-5,-0.14)--(-5,0.14);
    \draw[-] (5,-0.14)--(5,0.14);
    \draw[-] (8,-0.14)--(8,0.14);

    \draw[->,dashed] (0,0)--(0,3);

    \node[] at (8,-.55) {$\eps (R+L)$};
    \node[] at (5,-.55) {$\eps R$};
    \node[] at (-8,-.55) {$-\eps (R+L)$};
    \node[] at (-5,-.55) {$-\eps R$};
    \node[] at (0, -.55) {$0$};

    \node[] at (1, 2.9) {$\vth(x_1)$};
    \node[] at (9.5, 0.2) {$x_1$};
  \end{tikzpicture}
  \caption{The cut-off function $\vth$}
  \label{fig:cut-off}
\end{figure}

In order to motivate the central definition of this article, we take
the complex conjugate of the Helmholtz equation \eqref
{eq:Waveguide-R-L} and multiply with the product $v\, \vth$, where
$v\in V$ is arbitrary. We obtain
\begin{align*}
  &\int_{\Omega_{R+L}} a \nabla \bar u\cdot \nabla (v\, \vth) 
  - \int_{\Omega_{R+L}} (1 - \ri\delta \one_{\Omega_R})\, \omega^2
  \bar u\, v\, \vth
  = \int_{\Omega_{R+L}} \bar f\, v\,\vth\,.
\end{align*}
The gradient of $\vth$ can be expressed explicitly as $\nabla\vth =
-(\eps L)^{-1} e_1$ on $W_{R,L}^+$, $\nabla\vth = (\eps L)^{-1} e_1$
on $W_{R,L}^-$, and $\nabla\vth = 0$ on $\Omega_R$. We use the above
relation to define an approximate problem.

\begin{definition}[Problem (P)]\label{def:problem-P} 
  Given $R,L,\delta>0$, the index sets $I^\pm$, and $f\in L^2(\Omega)$
  with support in $\Omega_R$, a function $u\in V$ is called a solution
  to problem (P) if
  \begin{equation}
    \label{eq:P}
    \begin{split}
      \beta(u,v) := 
      &\int_{\Omega_{R+L}} a \nabla \bar u\cdot \nabla v\, \vth -
      \int_{\Omega_{R+L}} (1 - \ri\delta \one_{\Omega_R})\, \omega^2
      \bar u\, v\, \vth\\
      &\quad - \frac1{\eps L} \int_{W_{R,L}^+} a \nabla \bar u\cdot e_1\, v
      + \frac1{\eps L} \int_{W_{R,L}^-} a \nabla \bar u\cdot e_1\, v =
      \int_{\Omega_R} \bar f\, v
    \end{split}
  \end{equation}
  holds for every $v\in V$.
\end{definition}

\begin{remark}\label{rem:equiv} Problem (P) is formally equivalent to the
  Helmholtz equation \eqref {eq:Waveguide-R-L}. More precisely, the
  following holds:

  Let $u$ be a solution (P). Then $u$ solves the Helmholtz
  equation \eqref{eq:Waveguide-R-L} on $\Omega_{R}$. 

  Let $u\in H^1(\Omega_{R+L})$ be a solution of the Helmholtz equation
  \eqref{eq:Waveguide-R-L} on $\Omega_{R+L}$ with $\delta$ replaced by
  $\delta \one_{\Omega_R}$ and with the source $f$ supported in
  $\Omega_R$.  Then $u$ satisfies \eqref {eq:P} (but not necessarily
  $u\in V$).
\end{remark}

\begin{proof}
  Regarding the first statement, we consider an arbitrary
  test-function $v \in C_c^\infty(\Omega_R)$. Then $v\, \vth = v$ and
  $\nabla v\, \vth = \nabla v$, integrals over $W_{R,L}^\pm$
  vanish. Therefore \eqref {eq:P} is nothing but the weak formulation
  of \eqref {eq:Waveguide-R-L}.

  To verify the second statement, it suffices to take the conjugate
  complex of \eqref {eq:Waveguide-R-L}, to multiply with $v\, \vth$
  and to integrate. No boundary terms appear in the integration by
  parts since $\vth$ vanishes for $x_1 = \pm (R+L)$.
\end{proof}

\paragraph{The coupling condition in a special case.}
Let us investigate solutions to (P) in the case $\delta=0$, assuming
that $X^\pm$ is spanned by Bloch waves $U_\lambda^\pm$ that have
exactly the eigenvalue $\omega^2$.  We can integrate by parts in
\eqref {eq:P} and obtain
\begin{align*}
  &- \int_{\Omega_{R+L}} \nabla\cdot (a \nabla \bar u) v\, \vth 
  - \int_{\Omega_{R+L}} \omega^2 \bar u\, v\, \vth\\
  &\quad\qquad - \int_{\Gamma_R^+} [e_1\cdot a\nabla \bar u]_{\Gamma_R^+}\, v
  + \int_{\Gamma_R^-} [e_1\cdot a\nabla \bar u]_{\Gamma_R^-}\, v
  = \int_{\Omega_R} \bar f\, v\,.
\end{align*}
The function $u$ solves the Helmholtz equation in $\Omega_R$ by Remark
\ref {rem:equiv}. On the other hand, as a linear combination of
solutions, $u$ solves the Helmholtz equation also in
$W_{R,L}^\pm$. This implies that the first two integrals cancel with
the right hand side. Since $v$ was arbitrary in $V$, we have
\begin{equation}
  \label{eq:flux-R-L-eta}
  \int_{\Gamma_R^\pm} [e_1\cdot a\nabla u]_{\Gamma_R^\pm}\, U_\lambda^\pm = 0\quad
  \text{ for every }\ U_\lambda^\pm\in X^\pm\,.
\end{equation}
In this sense, problem (P) implements a weak flux condition
that replaces \eqref {eq:flux-R-L}.

\section{Existence result}

Problem (P) of Definition \ref {def:problem-P} reads: Find $u\in V$
that satisfies
\begin{equation}
  \label{eq:beta-problem}
  \beta(u,v) = \int_{\Omega_R} \bar f\, v
  \qquad \forall v\in V\,.
\end{equation}
We will derive a coercivity result for the form $\beta$ and obtain, as
a corollary, an existence result for problem (P).  The coercivity
result will be based on the following assumptions.

\begin{assumption}\label{ass:assumptions}
  We introduce the following assumptions on the index sets $I^\pm$ and
  the corresponding spaces $X^\pm$.
  \begin{enumerate}
  \item[(A1)] Positive speed: There exists a positive number $c_0 > 0$
    such that, for every $\lambda$ with $U_\lambda^\pm \in X^\pm$,
    there holds
    \begin{equation}
      \label{eq:pos-speed}
      \pm P_\lambda^\pm \ge c_0\,.
    \end{equation}
  \item[(A2)] For every pair of indices $\lambda = (j,m),
    \tilde\lambda = (\tilde j,\tilde m) \in I^\pm$ the wave numbers
    are different: 
    $j\neq \tilde j$.
  \item[(A3)] Regularity: For some constant $C_0>0$ and every
    $u\in X^\pm$ holds
    \begin{equation}
      \label{eq:inverse}
      \| u \|_{H^1(W)}^2 \le C_0 \| u \|_{L^2(W)}^2 \,.
    \end{equation}
  \end{enumerate}
\end{assumption}

\begin{remark}
  On Assumption \ref {ass:assumptions}. (i) Assumption (A2) is only
  used to exploit $b^\pm(U_\lambda^\pm, U_{\tilde\lambda}^\pm) = 0$
  for $\tilde\lambda \neq \lambda$. The assumption is not essential
  for the numerical scheme. (ii) If Assumption (A2) is satisfied, then
  the sets $I^+$ and $I^-$ are necessarily finite and the spaces
  $X^\pm$ are finite dimensional.  (iii) When $I^+$ and $I^-$ are
  finite sets, then (A3) is automatically satisfied since all
  functions $U_\lambda^\pm$ possess $H^1$-regularity. Under the same
  assumption, (A1) is satisfied if and only if no wave $U_\lambda^\pm$
  is used in $X^\pm$, which travels in vertical direction.
\end{remark}

\begin{theorem}[Existence result for problem (P)]
  \label{thm:existence}
  Let $R, L, \delta$ be positive parameters and let $f\in L^2(\Omega)$
  be a function with support in $\Omega_R$.  Let the coefficient
  function $a\in L^\infty(\Omega; \R)$ be bounded from below by
  $a_0>0$ and identical to $Y_\eps$-periodic functions $a_\pm$ for
  $\pm x_1 > \eps R/2$.  Let the index sets $I^\pm$ satisfy properties
  (A1)--(A3) of Assumption \ref {ass:assumptions} with constants
  $c_0, C_0 > 0$. Then problem (P) of Definition \ref {def:problem-P}
  has a unique solution $u$. For a constant
  $C = C(R, L, a_0, \delta, c_0, C_0)$ we have the stability estimate
  \begin{equation}
    \label{eq:sol-est}
    \| u \|_{H^1(\Omega_{R+L})} \le C \| f \|_{L^2(\Omega_{R})}\,.
  \end{equation}
\end{theorem}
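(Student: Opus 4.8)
The plan is to recognize \eqref{eq:beta-problem} as an abstract variational problem on the Hilbert space $V$ and to solve it by a Lax--Milgram type argument, the whole difficulty being concentrated in a coercivity estimate for $\beta$. First I would check that $V$ is a closed subspace of $H^1(\Omega_{R+L})$: vertical periodicity and the constraints $\{u\}^\pm_{R,L}\in X^\pm$ are linear, and under (A2) the spaces $X^\pm$ are finite dimensional (hence closed), so $V$ is itself a Hilbert space with the $H^1$-norm. Boundedness of $\beta$ is routine, since each of the four terms in \eqref{eq:P} is estimated by $\|a\|_{L^\infty}$, $\omega$, $\delta$ and the factor $(\eps L)^{-1}$ against $\|u\|_{H^1}\|v\|_{H^1}$; the right-hand side $v\mapsto\int_{\Omega_R}\bar f\,v$ is a bounded antilinear functional of norm at most $\|f\|_{L^2(\Omega_R)}$. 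The goal is therefore to prove an estimate of the form $\|u\|_{H^1(\Omega_{R+L})}^2\le C\,|\beta(u,u)|$ for all $u\in V$, with $C=C(R,L,a_0,\delta,c_0,C_0)$, which is coercivity in modulus.

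The heart of the matter is the imaginary part $\Im\beta(u,u)$. Writing $u^\pm:=\{u\}^\pm_{R,L}$ and using that $a=a_\pm$ on $W_{R,L}^\pm$, I would rewrite the two box integrals in terms of the energy-flux form $b^\pm$ of \eqref{eq:b-alt}: after the shift to $W=(0,\eps L)\times(0,\eps K)$ one gets $\tfrac1{\eps L}\int_{W_{R,L}^\pm} a\nabla\bar u\cdot e_1\,u=\eps K\,\overline{b^\pm(u^\pm,u^\pm)}$. Expanding $u^\pm=\sum_{\lambda\in I^\pm}\alpha^\pm_\lambda U^\pm_\lambda$, Assumption (A2) together with Lemma \ref{lem:orthogonalwavenumber} kills all off-diagonal terms, and \eqref{eq:relationbS-2} gives $\Im b^\pm(u^\pm,u^\pm)=\sum_{\lambda\in I^\pm}|\alpha^\pm_\lambda|^2P^\pm_\lambda$. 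Invoking the speed condition (A1), namely $\pm P^\pm_\lambda\ge c_0$, and the Plancherel identity \eqref{eq:L2normexpansion} to convert $\sum|\alpha^\pm_\lambda|^2$ back into $\|u^\pm\|^2_{L^2(W)}$, both box contributions turn out to have the correct sign. Since $\vth\equiv1$ on $\Omega_R$, the absorption term yields $\delta\omega^2\|u\|^2_{L^2(\Omega_R)}$, and the purely real bulk terms drop out of $\Im\beta$. The outcome is the key lower bound
\begin{equation*}
  \Im\beta(u,u)\ \ge\ \delta\omega^2\|u\|^2_{L^2(\Omega_R)}
  +\frac{c_0}{\eps L}\bigl(\|u^+\|^2_{L^2(W)}+\|u^-\|^2_{L^2(W)}\bigr)\,,
\end{equation*}
which controls $\|u\|_{L^2(\Omega_R)}$ (here $\delta>0$ is essential) and the $L^2$-norms of the restrictions to the radiation boxes.

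It remains to recover the gradient on $\Omega_R$ and to upgrade the control on the boxes. From $\Re\beta(u,u)$ I would isolate $\int_{\Omega_{R+L}}a|\nabla u|^2\vth\ge a_0\|\nabla u\|^2_{L^2(\Omega_R)}$ and move the indefinite term $-\omega^2\int_{\Omega_{R+L}}|u|^2\vth$ and the real parts of the box terms to the other side; these are bounded by the already-controlled $L^2$-quantities, after using the inverse/regularity estimate (A3), $\|u^\pm\|_{H^1(W)}^2\le C_0\|u^\pm\|_{L^2(W)}^2$, to dominate the gradients $\|\nabla u\|_{L^2(W_{R,L}^\pm)}$ appearing in the box integrals by $\|u^\pm\|_{L^2(W)}$. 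Assumption (A3) is exactly what converts the $L^2$-control on the boxes coming from $\Im\beta$ into full $H^1$-control there. Combining the two parts absorbs all lower-order contributions into $|\beta(u,u)|$ and yields $\|u\|^2_{H^1(\Omega_{R+L})}\le C|\beta(u,u)|$. The main obstacle is precisely this coercivity step: getting the signs of the box terms right through the $b^\pm$/Poynting bookkeeping, and correctly balancing the indefinite $-\omega^2\int|u|^2\vth$ against the $\delta$-term while using (A3) to close the estimate on the boxes.

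Finally, with $\beta$ bounded and coercive in modulus, I would conclude by the generalized Lax--Milgram theorem: the estimate gives $\|Bu\|\ge c\|u\|$ for the operator $B$ associated to $\beta$ via the Riesz representation, hence $B$ is injective with closed range; applying the same bound to the adjoint form $\beta^\ast(u,v)=\overline{\beta(v,u)}$, which satisfies $|\beta^\ast(u,u)|=|\beta(u,u)|$, shows $B^\ast$ injective, so the range of $B$ is dense and therefore all of $V$. Thus problem (P) has a unique solution, and testing \eqref{eq:beta-problem} with $v=u$ together with the coercivity estimate and the bound on the right-hand side functional delivers $\|u\|_{H^1(\Omega_{R+L})}\le C\|f\|_{L^2(\Omega_R)}$, which is \eqref{eq:sol-est}.
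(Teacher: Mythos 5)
Your proposal is correct and follows essentially the same route as the paper: the $L^2$-coercivity of $\Im\,\beta(u,u)$ via the Bloch expansion, the orthogonality of $b^\pm$, the Poynting numbers and Plancherel (Lemma \ref{lem:L2R-lower}), then a small multiple of $\Re\,\beta(u,u)$ to recover the gradient on $\Omega_R$, with (A3) upgrading the $L^2$-control on the radiation boxes to $H^1$ (Proposition \ref{prop:H1RL-lower}), and finally the generalized Lax--Milgram lemma for forms coercive in modulus. The only cosmetic difference is that you absorb the real parts of the box terms using (A3) together with Cauchy--Schwarz, whereas the paper encodes this absorption in the explicit smallness conditions \eqref{eq:ass-b-sigma}--\eqref{eq:ass-b-Re-Im} on the weight $\sigma$.
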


We derive the above theorem with a constant $C$ that satisfies $C\sim
\delta^{-1}$ for small $\delta$. The numerical experiments show a much
better behavior of the solution $u$: The scheme has good convergence
properties even for $\delta = 0$.

\begin{proof}[Proof of Theorem \ref {thm:existence}]
  The aim on the next pages is to derive, for two numbers $\sigma,
  \gamma > 0$, a coercivity estimate of the form
  \begin{equation}
    \label{eq:coercivity}
    \Im\, \beta(u,u) 
    + \sigma\delta\  \Re\, \beta(u,u) 
    \ge \gamma \| u\|_{H^1(\Omega_{R+L})}^2\,.
  \end{equation}
  We obtain this result as relation \eqref {eq:coercivity-H1-RL} in
  Proposition \ref {prop:H1RL-lower}.

  The Lax-Milgram Lemma implies the existence statement of Theorem
  \ref {thm:existence}. We note that the Lax-Milgram lemma in complex
  Hilbert spaces is applicable for sesquilinear forms that satisfy a
  coercivity estimate of the form \eqref {eq:coercivity}. We refer to
  \cite {Alt-FA} for a proof of the Lax-Milgram Lemma that works with
  the coercivity assumption $| \beta(u,u) | \ge \gamma \| u \|_{H^1}^2$,
  which is implied by \eqref {eq:coercivity}.

  Let us recall here the main point of the proof, which is the
  derivation of estimate \eqref {eq:sol-est} for solutions of \eqref
  {eq:beta-problem}: Using $v = u\in V$ as a test-vector in \eqref
  {eq:beta-problem} and exploiting \eqref {eq:coercivity} yields
  \begin{align*}
    \gamma \| u\|_{H^1(\Omega_{R+L})}^2
    &\le \Im\, \beta(u,u) 
    + \sigma\delta\  \Re\, \beta(u,u) 
    \le (1 + \sigma\delta) | \beta(u,u) |\\
    &\le (1 + \sigma\delta) \left| \int_{\Omega_R} \bar f\, u \right|
    \le (1 + \sigma\delta) \| f\|_{L^2(\Omega_R)} \| u\|_{L^2(\Omega_R)} \,.
  \end{align*}
  This provides \eqref {eq:sol-est} with the constant $C = (1 +
  \sigma\delta) \gamma^{-1}$.
\end{proof}

\subsection{Coercivity in $L^2$}

The main feature of the bilinear form $\beta$ is the positivity of the
imaginary part. Moreover, the imaginary part controls certain norms of
the argument.

\begin{lemma}[$L^2$-coercivity]\label{lem:L2R-lower}
  Let the index sets $I^\pm$ satisfy the outgoing wave property
  \eqref{eq:Ipm-poynt}. Then the sesquilinear form $\beta$ of
  Definition \ref {def:problem-P} satisfies the following
  $L^2$-coercivity estimate:
  \begin{equation}
    \label{eq:coercivity-L2-R}
    \Im\, \beta(u,u) \ge \delta\omega^2 \| u\|_{L^2(\Omega_{R})}^2
    \qquad \forall u\in V\,.
  \end{equation}

  Let additionally the positive speed property \eqref {eq:pos-speed}
  be satisfied in $X^\pm$ with the constant $c_0>0$. Then we have
  the stronger estimate
  \begin{equation}
    \label{eq:coercivity-L2-W}
    \Im\, \beta(u,u) \ge 
    \delta\omega^2 \| u\|_{L^2(\Omega_{R})}^2
    + \frac{c_0}{\eps L}\, 
    \left( \| u\|_{L^2(W^+_{R,L})}^2 + \| u\|_{L^2(W^-_{R,L})}^2\right)\,.
  \end{equation}
\end{lemma}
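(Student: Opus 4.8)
The plan is to compute $\Im\,\beta(u,u)$ directly from Definition~\ref{def:problem-P} and to identify its three contributions: a bulk term on $\Omega_R$ and two box terms on $W^\pm_{R,L}$. First I would observe that in $\beta(u,u)$ the Dirichlet integral $\int_{\Omega_{R+L}} a|\nabla u|^2\,\vth$ is real, since $a$ is real and $\vth\ge 0$, and likewise the real part $\int_{\Omega_{R+L}}\omega^2|u|^2\,\vth$ of the mass term contributes nothing imaginary. The only imaginary bulk contribution comes from the absorption term $\ri\delta\,\one_{\Omega_R}\,\omega^2|u|^2\vth$; since $\vth\equiv 1$ on $\Omega_R$, this yields exactly $\delta\omega^2\|u\|_{L^2(\Omega_R)}^2$.

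Next I would rewrite the two box integrals in terms of the flux forms $b^\pm$. On $W^+_{R,L}$ we have $a=a_+$, and $\int_{W^+_{R,L}}a\,\nabla\bar u\cdot e_1\,u=\overline{\int_{W^+_{R,L}}a_+\,\bar u\,\partial_1 u}=\eps^2 LK\,\overline{b^+(\{u\}^+_{R,L},\{u\}^+_{R,L})}$, using the shift that identifies $\{u\}^+_{R,L}$ with a function on $W=(0,\eps L)\times(0,\eps K)$ of measure $|W|=\eps^2 LK$. Carrying the prefactor $-(\eps L)^{-1}$ from \eqref{eq:P} and taking imaginary parts turns this box term into $+\eps K\,\Im\,b^+(\{u\}^+_{R,L},\{u\}^+_{R,L})$; the analogous computation on $W^-_{R,L}$ (with $a=a_-$ and the opposite sign in $\beta$) produces $-\eps K\,\Im\,b^-(\{u\}^-_{R,L},\{u\}^-_{R,L})$. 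Collecting everything yields the master identity
\begin{equation*}
  \Im\,\beta(u,u)=\delta\omega^2\|u\|_{L^2(\Omega_R)}^2
  +\eps K\,\Im\,b^+(\{u\}^+_{R,L},\{u\}^+_{R,L})
  -\eps K\,\Im\,b^-(\{u\}^-_{R,L},\{u\}^-_{R,L})\,.
\end{equation*}

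The decisive step is to control the sign of the two box terms. Writing $\{u\}^\pm_{R,L}=\sum_{\lambda\in I^\pm}\alpha^\pm_\lambda U^\pm_\lambda$ and expanding $b^\pm$ bilinearly, I would invoke the orthogonality Lemma~\ref{lem:orthogonalwavenumber}, under which the cross terms $b^\pm(U^\pm_\lambda,U^\pm_{\tilde\lambda})$ between different wave numbers vanish, leaving only the diagonal; by \eqref{eq:relationbS-2} this gives $\Im\,b^\pm(\{u\}^\pm_{R,L},\{u\}^\pm_{R,L})=\sum_{\lambda\in I^\pm}|\alpha^\pm_\lambda|^2 P^\pm_\lambda$. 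For the first estimate the outgoing property \eqref{eq:Ipm-poynt} ($P^+_\lambda>0$ on $I^+$, $P^-_\lambda<0$ on $I^-$) makes the $W^+$ term nonnegative and the $W^-$ term nonpositive, so both box contributions are $\ge 0$ and \eqref{eq:coercivity-L2-R} follows. For the stronger estimate I replace the mere sign by the positive-speed bound $\pm P^\pm_\lambda\ge c_0$ from \eqref{eq:pos-speed}, obtaining $\eps K\,\Im\,b^+\ge \eps K\,c_0\sum_\lambda|\alpha^+_\lambda|^2$, and then convert the coefficient sum back to an $L^2$ norm by the Plancherel formula \eqref{eq:L2normexpansion}, $\sum_\lambda|\alpha^+_\lambda|^2=(\eps^2 LK)^{-1}\|u\|_{L^2(W^+_{R,L})}^2$; the factor $\eps K\cdot(\eps^2 LK)^{-1}=(\eps L)^{-1}$ reproduces exactly the claimed constant $c_0/(\eps L)$, and likewise on $W^-_{R,L}$.

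The main obstacle I anticipate is the sign analysis of the box forms rather than the identity itself: one must ensure that the interference (off-diagonal) terms genuinely drop out, which is precisely what Lemma~\ref{lem:orthogonalwavenumber} supplies once all pairs in $I^\pm$ carry distinct wave numbers, and one must track the conjugation and the geometric prefactors $\eps^2 LK$ and $\eps K$ carefully so that the constant in \eqref{eq:coercivity-L2-W} comes out as $c_0/(\eps L)$. Once the box terms are reduced to $\sum_\lambda|\alpha^\pm_\lambda|^2 P^\pm_\lambda$ with the correct sign, both inequalities are immediate.
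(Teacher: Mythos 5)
Your proposal is correct and takes essentially the same route as the paper: the paper likewise reduces $\Im\,\beta(u,u)$ to the identity $\delta\omega^2\|u\|_{L^2(\Omega_R)}^2+\eps K\sum_{\lambda\in I^+}|\alpha^+_\lambda|^2P^+_\lambda-\eps K\sum_{\lambda\in I^-}|\alpha^-_\lambda|^2P^-_\lambda$ via the forms $b^\pm$, Lemma~\ref{lem:orthogonalwavenumber}, and \eqref{eq:relationbS-2}, then concludes by the sign of $P^\pm_\lambda$ respectively by \eqref{eq:pos-speed} and Plancherel. Your closing caveat that the off-diagonal cancellation needs all indices in $I^\pm$ to carry distinct wave numbers (i.e.\ (A2)) is a hypothesis the paper's proof also uses implicitly here without listing it in the lemma, so you match the paper on that point as well.
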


\begin{proof} Let $u\in V$ be arbitrary.  By definition of the space
  $V$, the function $u$ can be expanded in Bloch waves in the two
  rectangles $W_{R,L}^\pm$.  We write the shifted functions as
  \begin{equation}
    \label{eq:expand-box}
    \left\{ u\right\}^+_{R,L} 
    = \sum_{\lambda\in I^+} \alpha_\lambda^+ U_\lambda^+\,,\qquad
    \left\{ u\right\}^-_{R,L}
    = \sum_{\lambda\in I^-} \alpha_\lambda^- U_\lambda^+\,.
  \end{equation}

  With $\beta$ from \eqref {eq:P} we now calculate $\beta(u,u)$. The
  integrals over $W_{R,L}^\pm$ can be expressed with the sesquilinear
  forms $b^\pm$ from \eqref{eq:b-alt}. The orthogonality property of
  Lemma \ref {lem:orthogonalwavenumber} allows to calculate
  \begin{align*}
    \beta(u,u) &= \int_{\Omega_{R+L}} a |\nabla u|^2\, \vth -
    \int_{\Omega_{R+L}} (1- \ri\delta \one_{\Omega_R})\, \omega^2 | u |^2 \vth\\
    &\quad - \eps K\, \overline{b^+}\left(\left\{ u\right\}^+_{R,L} ,
      \left\{ u\right\}^+_{R,L}\right)
    + \eps K\, \overline{b^-}\left(\left\{ u\right\}^-_{R,L} , \left\{ u\right\}^-_{R,L}\right)\\
    &= \int_{\Omega_{R+L}} a |\nabla u|^2\, \vth -
    \int_{\Omega_{R+L}} (1- \ri\delta \one_{\Omega_R})\, \omega^2 | u |^2 \vth\\
    &\quad - \eps K\, \sum_{\lambda\in I^+}
    |\alpha_\lambda^+|^2 \ \overline{b^+}(U_\lambda^+, U_\lambda^+) +
    \eps K\, \sum_{\lambda\in I^-} |\alpha_\lambda^-|^2 \
    \overline{b^-}(U_\lambda^-, U_\lambda^-)\,.
  \end{align*}
  Taking the imaginary part and using the definition of $P^\pm$ from
  \eqref {eq:relationbS-2}, we obtain
  \begin{equation}\label {eq:Im-beta-u-u}
    \Im\, \beta(u,u) = \int_{\Omega_R} \delta\, \omega^2 | u |^2
    + \eps K \sum_{\lambda\in I^{+}} |\alpha_\lambda^+|^2 P_\lambda^+
    - \eps K \sum_{\lambda\in I^{-}} |\alpha_\lambda^-|^2 P_\lambda^-\,.
  \end{equation}
  Non-negativity of $P_\lambda^+$ for $\lambda\in I^{+}$ and
  non-positivity of $P_\lambda^-$ for $\lambda\in I^{-}$ implies
  the lower bound \eqref {eq:coercivity-L2-R}.

  \smallskip {\em Estimate \eqref {eq:coercivity-L2-W}.}  In the case
  that the positive speed property is satisfied, the box integrals
  yield a strictly positive contribution.  Inserting \eqref
  {eq:pos-speed} into \eqref {eq:Im-beta-u-u} we find
  \begin{equation}
    \begin{split}
      \Im\, \beta(u,u) &\ge \int_{\Omega_R} \delta\, \omega^2 | u |^2
      +c_0 \eps K \left( \sum_{\lambda\in
          I^{+}} |\alpha_\lambda^+|^2
        + \sum_{\lambda\in I^{-}} |\alpha_\lambda^-|^2 \right)\\
      &\ge \int_{\Omega_R} \delta\, \omega^2 | u |^2
      +\frac{c_0}{\eps L} \left( \int_{W_{R,L}^+} |u|^2 +
        \int_{W_{R,L}^-} |u|^2 \right)\,,
    \end{split}\label{eq:L2-intermediate-834}
  \end{equation}
  where we used the Plancherel formula \eqref {eq:L2normexpansion} in
  the last line. This yields \eqref {eq:coercivity-L2-W}.
\end{proof}

\begin{remark}
  (i) The $L^2(\Omega_R)$ coercivity of Lemma \ref {lem:L2R-lower} is
  not sufficient for an existence result since the sesquilinear form
  $\beta$ is defined on $H^1(\Omega_{R+L})$.

  (ii) The lower bound in \eqref {eq:coercivity-L2-R} depends on
  $\delta$. This fact is discouraging when one seeks to perform a
  limiting absorption principle, which needs estimates that are
  uniform in $\delta$. By contrast, considering only the norm of the
  solution in the radiating boxes $W_{R,L}^\pm$, the bound in \eqref
  {eq:coercivity-L2-W} is independent of $\delta$. This
  $\delta$-independent bound seems to be the reason for the
  numerically observed stability of problem (P): The scheme works well
  even for $\delta = 0$.
\end{remark}

\subsection{Coercivity in $H^1$}

We turn now to the coercivity estimate that corresponds to the chosen
function space. The two assumptions in \eqref {eq:ass-b-sigma} and
\eqref {eq:ass-b-Re-Im} essentially demand the smallness of $\sigma$
in comparison to $1$ and to $c_0/\delta$.

\begin{proposition}[$H^1(\Omega_{R+L})$-coercivity]
  \label{prop:H1RL-lower}
  Let the index sets $I^{\pm}$ satisfy properties (A1)--(A3) of
  Assumption \ref {ass:assumptions} with constants $c_0, C_0 > 0$.
  Let $\sigma>0$ be small enough to satisfy the two properties
  \begin{align}
    \label{eq:ass-b-sigma}
    \sigma &\le  \min \left\{ \frac12 , 
      \frac{c_0}{4 \eps L \delta \omega^2} \right\}\,,\\
    2 \sigma\delta\ \left| \Re\, b^\pm(U_\lambda^\pm, U_\lambda^\pm) \right|
    &\le \Im\, b^\pm(U_\lambda^\pm, U_\lambda^\pm)
    \qquad\forall \lambda \text{ with } U_\lambda^\pm \in X^\pm\,.
    \label{eq:ass-b-Re-Im}
  \end{align}
  Then there exists
  $\gamma = \gamma(c_0, \delta, \sigma, \omega, a_0) >0$ such that for
  every $u\in V$ holds
  \begin{equation}
    \label{eq:coercivity-H1-RL}
    \Im\, \beta(u,u) 
    + \sigma\delta\  \Re\, \beta(u,u) 
    \ge \gamma \| u\|_{H^1(\Omega_{R+L})}^2\,.
  \end{equation}
\end{proposition}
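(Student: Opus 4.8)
The plan is to write $\Im\,\beta(u,u) + \sigma\delta\,\Re\,\beta(u,u)$ out explicitly and then reduce \eqref{eq:coercivity-H1-RL} to a region-by-region estimate on $\Omega_{R+L} = \Omega_R \cup W_{R,L}^+ \cup W_{R,L}^-$. First I would compute $\Re\,\beta(u,u)$ in complete analogy with \eqref{eq:Im-beta-u-u}: expand $\{u\}^\pm_{R,L} = \sum_{\lambda\in I^\pm}\alpha_\lambda^\pm U_\lambda^\pm$, rewrite the two box integrals of \eqref{eq:P} through the forms $b^\pm$ of \eqref{eq:b-alt}, and diagonalize the box contributions using the orthogonality $b^\pm(U_\lambda^\pm,U_{\tilde\lambda}^\pm)=0$ for $\lambda\neq\tilde\lambda$ of Lemma \ref{lem:orthogonalwavenumber} (applicable thanks to (A2)). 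Writing $b_\lambda^\pm := b^\pm(U_\lambda^\pm,U_\lambda^\pm)$ and recalling $P_\lambda^\pm = \Im\,b_\lambda^\pm$ from \eqref{eq:relationbS-2}, the quantity $S := \Im\,\beta(u,u)+\sigma\delta\,\Re\,\beta(u,u)$ then becomes
\begin{align*}
  S &= \sigma\delta\int_{\Omega_{R+L}} a\,|\nabla u|^2\,\vth
      + \delta\omega^2 \|u\|^2_{L^2(\Omega_R)}
      - \sigma\delta\omega^2\int_{\Omega_{R+L}} |u|^2\,\vth\\
    &\quad + \eps K\!\!\sum_{\lambda\in I^+}\!|\alpha_\lambda^+|^2\big(\Im\,b_\lambda^+ - \sigma\delta\,\Re\,b_\lambda^+\big)
      + \eps K\!\!\sum_{\lambda\in I^-}\!|\alpha_\lambda^-|^2\big({-\Im\,b_\lambda^-} + \sigma\delta\,\Re\,b_\lambda^-\big)\,.
\end{align*}

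Next I would treat the interior. On $\Omega_R$ we have $\vth\equiv 1$, so the two $L^2(\Omega_R)$ contributions combine to $(1-\sigma)\delta\omega^2\|u\|^2_{L^2(\Omega_R)}$, which by the bound $\sigma\le\tfrac12$ from \eqref{eq:ass-b-sigma} is at least $\tfrac12\delta\omega^2\|u\|^2_{L^2(\Omega_R)}$. For the gradient I would drop the non-negative box part of the Dirichlet integral and use $a\ge a_0$ with $\vth\equiv 1$ on $\Omega_R$ to retain $\sigma\delta a_0\|\nabla u\|^2_{L^2(\Omega_R)}$. The only interior term still to be absorbed is the negative box remainder $-\sigma\delta\omega^2\int_{W_{R,L}^+\cup W_{R,L}^-}|u|^2\,\vth$, which I postpone to the box estimate.

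For the radiation boxes, the diagonal coefficients are bounded below using \eqref{eq:ass-b-Re-Im}: since $2\sigma\delta|\Re\,b_\lambda^\pm|\le|P_\lambda^\pm|$ and $|P_\lambda^\pm|\ge c_0$ by (A1), each coefficient $\Im\,b_\lambda^+ - \sigma\delta\,\Re\,b_\lambda^+$, respectively $-\Im\,b_\lambda^- + \sigma\delta\,\Re\,b_\lambda^-$, is at least $\tfrac12|P_\lambda^\pm|\ge\tfrac{c_0}{2}$. By the Plancherel identity \eqref{eq:L2normexpansion}, $\eps K\sum_{\lambda\in I^\pm}|\alpha_\lambda^\pm|^2 = \tfrac1{\eps L}\|u\|^2_{L^2(W_{R,L}^\pm)}$, so the box terms are bounded below by $\tfrac{c_0}{2\eps L}\big(\|u\|^2_{L^2(W_{R,L}^+)}+\|u\|^2_{L^2(W_{R,L}^-)}\big)$. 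The postponed negative term is bounded in modulus by $\sigma\delta\omega^2\big(\|u\|^2_{L^2(W_{R,L}^+)}+\|u\|^2_{L^2(W_{R,L}^-)}\big)$, and the second bound $\sigma\le c_0/(4\eps L\delta\omega^2)$ in \eqref{eq:ass-b-sigma} gives $\sigma\delta\omega^2\le\tfrac{c_0}{4\eps L}$, so it removes at most half of the box terms and leaves $\tfrac{c_0}{4\eps L}\big(\|u\|^2_{L^2(W_{R,L}^+)}+\|u\|^2_{L^2(W_{R,L}^-)}\big)$. Collecting the pieces yields
\begin{equation*}
  S \;\ge\; \sigma\delta a_0\|\nabla u\|^2_{L^2(\Omega_R)}
     + \tfrac12\delta\omega^2\|u\|^2_{L^2(\Omega_R)}
     + \tfrac{c_0}{4\eps L}\big(\|u\|^2_{L^2(W_{R,L}^+)}+\|u\|^2_{L^2(W_{R,L}^-)}\big)\,.
\end{equation*}

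The step I expect to be the genuine obstacle is the gradient control inside the boxes: because the cut-off $\vth$ degenerates to $0$ at the outer boundary, the Dirichlet term $\sigma\delta\int a|\nabla u|^2\vth$ provides no lower bound for $\|\nabla u\|_{L^2(W_{R,L}^\pm)}$, so the estimate above controls $H^1$ only on $\Omega_R$ and merely $L^2$ on the boxes. This gap is exactly what the inverse-type regularity assumption (A3)/\eqref{eq:inverse} is for: since $\{u\}^\pm_{R,L}\in X^\pm$, we have $\|u\|^2_{H^1(W_{R,L}^\pm)}\le C_0\|u\|^2_{L^2(W_{R,L}^\pm)}$, which upgrades the $L^2$ box control to $H^1$ box control. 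Combining this with $\|u\|^2_{H^1(\Omega_R)}=\|u\|^2_{L^2(\Omega_R)}+\|\nabla u\|^2_{L^2(\Omega_R)}$ and the additivity of the norm over the three subdomains then gives \eqref{eq:coercivity-H1-RL} with $\gamma = \min\big\{\sigma\delta a_0,\ \tfrac12\delta\omega^2,\ \tfrac{c_0}{4\eps L\,C_0}\big\}$, which depends only on $c_0,\delta,\sigma,\omega,a_0$ (and $L,\eps,C_0$) as claimed.
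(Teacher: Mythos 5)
Your proof is correct and follows essentially the same route as the paper: expand the box contributions via the orthogonality of Lemma \ref{lem:orthogonalwavenumber}, use \eqref{eq:ass-b-Re-Im} with (A1) to keep half of the diagonal Poynting terms, use \eqref{eq:ass-b-sigma} to absorb the $-\sigma\delta\omega^2\int|u|^2\vth$ term into the $\Omega_R$ and box $L^2$ contributions, and invoke (A3) to upgrade the box $L^2$ control to $H^1$. The only cosmetic difference is that the paper routes the argument through the intermediate half-split estimate of Lemma \ref{lem:L2R-lower}, whereas you bound the diagonal coefficients directly; the resulting constants agree.
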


\begin{proof}
  Relation \eqref {eq:coercivity-L2-W} of Lemma \ref {lem:L2R-lower}
  together with \eqref {eq:Im-beta-u-u} provides the following lower
  bound for the imaginary part of $\beta(u,u)$:
  \begin{equation}
    \label{eq:L2-result-2354}
    \begin{split}
      \Im\, \beta(u,u) \ge\, & \delta\omega^2
      \| u\|_{L^2(\Omega_{R})}^2 + \frac{c_0}{2\eps L}\, \left( \|
        u\|_{L^2(W^+_{R,L})}^2 + \| u\|_{L^2(W^-_{R,L})}^2\right)\\
      &+ \frac{\eps K}{2} \sum_{\lambda\in I^{+}} |\alpha_\lambda^+|^2 P_\lambda^+
      - \frac{\eps K}{2} \sum_{\lambda\in I^{-}} |\alpha_\lambda^-|^2 P_\lambda^-\,.
    \end{split}
  \end{equation}
  We now evaluate the real part of $\beta(u,u)$ from its definition in
  \eqref {eq:P}. After a multiplication with the factor $\sigma\delta$
  we find, with the orthogonality \eqref {eq:orthogonalwavenumber},
  \begin{equation}
    \label{eq:L2-result-real}
    \begin{split}
      &\sigma\delta\ \Re\, \beta(u,u) = \,
      \sigma\delta\, \int_{\Omega_{R+L}} a |\nabla  u|^2\, \vth 
      - \sigma\delta\, \int_{\Omega_{R+L}} \omega^2 |u|^2 \vth\\
      &\quad - \sigma\delta\, \eps K\, 
      \sum_{\lambda\in I^{+}} |\alpha_\lambda^+|^2 \,
      \Re\, b^+(U_\lambda^+, U_\lambda^+) + \sigma\delta\, \eps K\,
      \sum_{\lambda\in I^{-}} |\alpha_\lambda^-|^2 \,
      \Re\, b^-(U_\lambda^-, U_\lambda^-)\,.
    \end{split}
  \end{equation}
  Due to assumption \eqref {eq:ass-b-Re-Im} on $\sigma$, the two sums
  in \eqref {eq:L2-result-real} are smaller in absolute value than the
  two sums in \eqref {eq:L2-result-2354}.  Due to assumption \eqref
  {eq:ass-b-sigma} on $\sigma$, the second integral in \eqref
  {eq:L2-result-real} is bounded in absolute value by the half of the
  first two contributions on the right hand side of \eqref
  {eq:L2-result-2354}. We therefore obtain 
  \begin{equation*}
    \begin{split}
      \Im\, \beta(u,u) + \sigma\delta\ \Re\, \beta(u,u) 
      \ge
      &\frac{\delta\omega^2}{2} \| u\|_{L^2(\Omega_{R})}^2 +
      \frac{c_0}{4\eps L}\, \left( \| u\|_{L^2(W^+_{R,L})}^2 + \|
        u\|_{L^2(W^-_{R,L})}^2\right)\\
      & + \sigma\delta a_0 \int_{\Omega_{R+L}} |\nabla  u|^2\, \vth \,,
    \end{split}
  \end{equation*}
  where $a \ge a_0$ was used.  The inverse estimate \eqref
  {eq:inverse} implies that the right hand side controls the squared
  $H^1(\Omega_{R+L})$-norm. We thus arrive at \eqref
  {eq:coercivity-H1-RL}.
\end{proof}

\begin{remark}
  An inspection of the assumptions on $\sigma$ shows that the
  coercivity constant $\gamma$ has the properties $\gamma\sim \delta$
  for small $\delta>0$ and $\gamma\sim \sigma \sim c_0$ for small
  $c_0 >0$.
\end{remark}


\section{Numerical method and examples}
\label{sec.numerics}

\subsection{Numerical method}

\subsubsection*{Finite element discretization of problem (P)}

Our aim is to approximate problem (P) with a finite element method
(FEM), using an enriched, problem adapted set of basis functions. Once
a finite dimensional subspace $V_h$ of $V$ is defined with basis
functions, we have obtained in a natural way a discretization of
problem (P). Our construction uses piecewise linear hat functions on a
triangular mesh. More precisely, the space $V_h$ is spanned by
standard (piecewise linear) hat functions in $\Omega_R$ and by
(approximation of) Bloch waves in the radiation boxes. We use Bloch
waves $U_\lambda^+$ with a positive Poynting number in the radiation
box $W_{R,L}^+$ and $U_\lambda^-$ with a negative Poynting number in
the box $W_{R,L}^-$. The Bloch waves themselves are computed with
piecewise linear hat functions.

\paragraph{Choice of a regular grid.}

We use a (uniform) triangulation mesh with right angled triangles on
$\overline{\Omega_{R+L}}$. The fineness parameters $h_1>0$ and $h_2>0$
denote the lengths of the triangle legs in the $x_1$ and $x_2$
directions respectively. The grid points $x^{(k)}\in
\overline{\Omega_{R+L}}$, $k=1,\dots, N_h$ are enumerated so that
$$
x^{(k)} \in \begin{cases} (-\eps R,\eps R)\times [0,\eps K) &\text{ for } k=1,\dots,N_0, \\
\overline{W^+_{R,L}} &\text{ for } k=N_0+1,\dots,N_0+N_W,\\
\overline{W^-_{R,L}} &\text{ for } k=N_0+N_W+1,\dots,N_0+2N_W=N_h.\\ 
\end{cases}
$$

\paragraph{Hat functions.}

To the grid we assign the standard piecewise linear hat functions
$\phi_k,k=1,\dots,N_h$ with $\phi_k(x^{(l)})=\delta_{k,l}$ for
$k,l=1,\dots,N_h$. To impose vertical periodicity in $\Omega_R$, each
hat function $\phi_k$ with
$x^{(k)}\in [-\eps (R+L),\eps (R+L)]\times \{0\}$ (i.e.\,a lower
boundary point) consists of the hat function half corresponding to
$x^{(k)}$ and the hat function half corresponding to the artificial
grid point $(x_1^{(k)},x_2^{(k)}+\eps K)$.  These hat functions (and,
hence, any linear combination thereof) are periodic in the vertical
direction.


\paragraph{Bloch waves.}

Next, we define the Bloch wave basis functions. For each selected wave
vector $j$ we first solve the eigenvalue problems
$\mathcal{L}^\pm_j \Psi_j^\pm=\mu^\pm(j)\Psi_j^\pm$ on the cube
$Y_\eps$ with periodic boundary conditions. The FEM-solutions to
these problems are denoted $\Psi_{j,m}^{\pm,h}$, $m\in \N$.

For each $m$ in an appropriately chosen subset of $\N$ we extend the
solution by periodicity onto the radiation box
$\overline{W^\pm_{R,L}}$ and use \eqref{eq:abbreviationUlambda} to
define $U_{\lambda}^{\pm,h}$ (recall that $\lambda=(j,m)$).  Each
Bloch wave $U_{\lambda}^{\pm,h}$ is extended by zero to all grid
points outside $\overline{W^\pm_{R,L}}$. Our selected set of indices
$\lambda$ is denoted by $I^{\pm,h}$ and is specified below.

The resulting Bloch waves can be written, for each
$\lambda_i\in I^{\pm,h}$, as
\begin{equation}\label{E:Bloch-hat}
  U_{\lambda_i}^{+,h}(x)=\sum_{k=N_0+1}^{N_0+N_W}\kappa_k^{+,i}\phi_k(x),
  \quad U_{\lambda_i}^{-,h}(x)=\sum_{k=N_0+N_W+1}^{N_h}\kappa_k^{-,i}\phi_k(x)\,,
\end{equation}
with coefficients $\kappa_k^{\pm,i}\in \C$ for all $i,k$. We emphasize
that the functions $U_{\lambda_i}^{\pm,h}$ are continuous on
$\overline{\Omega_{R+L}}$ for all $i$. On the other hand, they are
concentrated in the radiation boxes in the sense that
$U_{\lambda_i}^{+,h}(x^{(k)})=0$ for $x^{(k)}_1<\eps R$,
$U_{\lambda_i}^{-,h}(x^{(k)})=0$ for $x^{(k)}_1>-\eps R$.

The sets $I^{\pm,h}$ are discrete analogs of $I^\pm$ satisfying
(A1)--(A3) in Assumption \ref{ass:assumptions}. However, in contrast
to $I^\pm$ we choose for the numerics the $j-$domain to be
$\mathbb{B}:=\left(-\tfrac{1}{2},\tfrac{1}{2}\right]^2,$ such that
$\frac{2\pi}{\eps}\mathbb{B}$ is the standard Brillouin zone
corresponding to the periodicity cell $Y_\eps$. This symmetric choice
has the advantage that the band structure plots clearly show the
conical shape in the case of homogeneous media. The set $Q_{K}$ from
Section \ref{S:Bloch} (defined to ensure the vertical
$\eps K$-periodicity of the solution in $W^\pm_{R,L}$) needs to be
modified to
$$ 
Q'_K:=\begin{cases}
\{-\frac{K-2}{2K},-\frac{K-4}{2K},\dots, \frac{1}{2}\} , & \text{for } K\in 2\N,\\
\{-\frac{K-1}{2K},-\frac{K-3}{2K},\dots, \frac{K-1}{2K}\} , & \text{for } K\in 2\N+1.
\end{cases}
$$ 
The Poynting numbers $P_\lambda^\pm$ are computed via a numerical
quadrature of \eqref{eq:Poynting-def} in the piecewise linear finite
element space.

\paragraph{Approximation of the space $V$.}

Assuming for simplicity that the number of Bloch basis functions is
the same in both radiation boxes,
$|I^{+,h}|= |I^{-,h}|=: N_{\text{Bl}}\in \N$, we can now define the
finite dimensional space as
\begin{equation}\label{E:Vh}
V_h:= \text{span}\{\psi_1, \dots,\psi_{N_0+2N_{\text{Bl}}}\},
\end{equation}
where 
$$\psi_k:=\begin{cases}
\phi_k \ &\text{for} \  k=1, \dots N_0,\\
U_{\lambda_i}^{+,h} \ \text{with} \ \lambda_i\in I^{+,h}  \ &\text{for} \ k=N_0+i, i= 1,\dots, N_{\text{Bl}},\\
U_{\lambda_i}^{-,h} \ \text{with} \ \lambda_i\in I^{-,h}  \ &\text{for} \ k=N_0+N_{\text{Bl}}+i, i= 1,\dots, N_{\text{Bl}}.
\end{cases}$$

\paragraph{Discretization of problem (P).}

The finite dimensional subspace $V_h \subset V$ defines a discrete
problem (P). The complex conjugate of \eqref{eq:P} can be written with
matrices and coordinate vectors as
$${\bf A}\vec{U}-{\bf B}\vec{U}-\omega^2{\bf M}^{(\delta)}\vec{U}=\vec{F}\,.$$
Here $\vec{U} \in \C^{N_0+2N_{\text{Bl}}}$ is the unknown coordinate
vector and the matrix entries are, for each
$k,l=1,\dots,N_0+2N_{\text{Bl}}$,
$$
\begin{array}{rlrl}
{\bf A}_{k,l}&=\int_{\Omega_{R+L}} a\vartheta \nabla \overline{\psi_k}\cdot \nabla \psi_l, &{\bf B}_{k,l} &=\frac{1}{\eps L}\left(\int_{W^+_{R,L}}a\overline{\psi_k}e_1 \cdot \nabla \psi_l-\int_{W^-_{R,L}}a\overline{\psi_k}e_1 \cdot \nabla \psi_l\right),\\
{\bf M}^{(\delta)}_{k,l}&=(1+\ri \delta{\bf 1}_{\Omega_R})\int_{\Omega_{R+L}}\overline{\psi_k}\psi_l, &F_k &= \int_{\Omega_R} f\overline{\psi_k}.
\end{array}
$$
Due to the representation \eqref{E:Bloch-hat}, all integrals involving
the Bloch waves $U^{\pm,h}_{\lambda_i}$ can be evaluated using solely
integrals of hat functions $\phi_k$.

\subsubsection*{Numerical implementation caveats}

\paragraph{Choice of the Bloch indices $I^{\pm,h}$.}

A suitable choice of the index sets $I^{\pm,h}$ is crucial for an
efficient and accurate numerical scheme. A direct analog of $I^\pm$
satisfying $I^\pm\subset I_{L,K}$ and assumptions (A1)--(A2) can be
built by the following procedure. First, for each
$j\in Q'_L\times Q'_K$ one solves the Bloch eigenvalue problems
$\mathcal{L}^\pm_j \Psi_j^\pm=\mu^\pm(j)\Psi_j^\pm$ in the
FEM-approximation. For each $j$ one keeps only the eigenvalue
$\mu_m^\pm(j)$ closest to $\omega^2$, which selects a natural number
$m$ for every vector $j$. Subsequently, one filters out eigenvalues
$\mu^+_m(j)$ with a non-positive Poynting number of the Bloch wave and
eigenvalues $\mu^-_m(j)$ with a non-negative Poynting number of the
Bloch wave. The remaining pairs $(j,m)$ define the sets $I^{\pm,h}$.

Numerical tests have shown that this approach works, but more accurate
results are obtained when the horizontal $L$-periodicity requirement
(i.e.\,$j_1\in Q_L'$) is dropped. We take the liberty to choose points
$(j,m)\in I^{\pm,h}$ so that the frequency level $\omega^2$ is better
realized: $|\omega^2-\mu^\pm_m(j)|$ is minimized.

In practice, we first solve the eigenvalue problems
$\mathcal{L}^\pm_j \Psi_j^\pm=\mu^\pm(j)\Psi_j^\pm$ in the
FEM-approximation for all $j$ (with $j_2\in Q'_K$) on a selected
$j_1-$mesh of $(-1/2, 1/2]$. For each $j$ we save only the eigenvalue
$\mu_m^\pm(j)$ closest to $\omega^2$. Subsequently, for each
$j_2 \in Q'_K$ we search for intersections of the line
$\left(-\tfrac{1}{2},\tfrac{1}{2}\right]\times\{j_2\}$ with the level
set of the band structure at $\omega^2$. Such intersections can occur
for more eigenvalue families $\mu^\pm_m,m\in \N$. For each such family
$\mu^\pm_m$ the intersections are found by an interpolation producing
an approximation of the $j_1$-coordinates at which
$\mu^\pm_m(j_1,j_2)=\omega^2$. The resulting pair $(m,(j_1,j_2))$ is
then included in the set $I^{\pm,h}$ if the Poynting number of the
Bloch wave $U_{(j,m)}^{\pm,h}$ has the appropriate sign. Including
these intersection points in $I^{\pm,h}$ leads to the same accuracy of
the calculations with a much smaller number $N_\text{Bl}$.

\paragraph{Orthogonalization of the Bloch waves
  $U_{\lambda_i}^{\pm,h}$ with $\lambda_i \in I^{\pm,h}$.}

The index set $I^{\pm,h}$ selected by the above procedure typically
contains $j$-points lying no further than $\tfrac{\pi}{\eps K}$
apart. This separation is small for $K$ large and the corresponding
Bloch waves $U^{+,h}_{\lambda_i},\lambda_i\in I^{+,h}$ are similar if
they belong to the same $m$. In order to keep the condition number of
${\bf A}$ small, we $L^2$-orthonormalize the set
$U^{+,h}_{\lambda_i},\lambda_i\in I^{+,h}$ via the modified
Gram-Schmidt procedure.  The superscript ``$-$" is treated analogously.

\paragraph{Scattering problem with an incoming wave.}
 
When studying scattering problems with an incoming field
$u^{(\text{in})}$ supported on $x_1<0$, we use the following method to
transform $u^{(\text{in})}$ into the inhomogeneity $f$ (without
changing $f$ on $x_1\geq 0$). We set
$$u_\theta:= u-u^{(\text{in})}\theta \quad \text{with} \ \theta(x_1,x_2)=\theta(x_1)=\begin{cases}1, & x_1<-\eps R\\
  1-\tanh(d(x_1+\tfrac{\eps R}{2})), & x_1\in [-\eps R,0)\\ 0, &
  x_1\geq 0,\end{cases}$$ where $d$ is sufficiently large to ensure
that $\theta$ is close to zero at $x_1=0$. This leads to the
transformed problem
\begin{equation}\label{E:Helmh_incom}
  -\nabla \cdot (a\nabla u_\theta)-\omega^2(1+\ri \delta)u_\theta=\tilde{f},
  \quad \tilde{f}:=f+2a\nabla u^{(\text{in})}\cdot \nabla \theta+u^{(\text{in})}a\Delta \theta\,,
\end{equation}
which we treat exactly as described above.

\subsection{Numerical results I: Comparison with
  homogenization}\label{S:comp_hom}

In our first numerical example we consider the interface between a
homogenous and a periodic material and study a single incoming plane
wave $u^{(\text{in})} = e^{\ri j^{(\text{in})}\cdot x}$,
$j^{(\text{in})}\in \R^2$.  We use the method developed above to
calculate an approximate solution $u=u_\theta+u^{(\text{in})}$, where
$u_\theta$ solves \eqref{E:Helmh_incom}. We compare this solution $u$
of the original problem with the solution $u_\text{hom}$ of the
interface problem with a homogenized material on the right.

We choose $\eps=1$, $R=15$, $L=6$ and $H=14$ and a discretization
given by $h_1=0.05$ and $h_2\approx 0.0526$. We use $N_\text{Bl}=4$
Bloch waves in each radiation box. The absorption constant is set to
$\delta =10^{-4}$. The heterogeneous material is described by the
constant $1$ (``air") on $x_1<0$ and a periodic array of discs on
$x_1\geq 0$. We choose the same structure as in
\cite{PhysRevB.65.201104}, i.e.
\begin{equation}\label{E:a-Luo}
  a(x)=\begin{cases}
    1, \ &x_1<0\\
    a_+(x), \ &x_1\geq 0
  \end{cases}
\end{equation}
with
\begin{equation}
  a_+(x) := \begin{cases}
    \frac{1}{12}, &\text{dist}(x,\{(\tfrac{1}{2},0),(0,\tfrac{1}{2}),(\tfrac{1}{2},1),(1,\tfrac{1}{2})\})<\tfrac{1}{\sqrt{2}}0.35\\
    1 &\text{otherwise}
  \end{cases}
  \label{eq:a+formula}
\end{equation}
for $x\in Y_\eps$ and $a_+(x)=a_+(x+\eps e_j)$, $j=1,2$ for all $x\in
\R^2$ .

The incoming wave $u^{(\text{in})}(x)=e^{\ri j^{(\text{in})}\cdot x}$
has to satisfy $\omega^2=|j^{(\text{in})}|^2$ since $a\equiv 1$ on
$x_1<0$.

For $\eps\to 0$ (or, equivalently, for incoming waves of large period)
the problem on $x_1\geq 0$ can be approximated by the homogenized
equation \cite{Conca-Vanninathan,DLS-2014}
$$
\begin{aligned}
  &-a_*\Delta u_\text{hom}=\omega^2 u_\text{hom}\,,\\
  &a_*=\frac{1}{2}\left(\frac{\eps}{2\pi}\right)^2\partial_{j_1}^2\mu^+_0(0)
  =\mean_{Y_\eps}a_+(x)\left[1-\frac{\ri}{2}\left(\partial_{x_1}\partial_{k_2}\psi_0^+(x)+\partial_{x_2}\partial_{k_1}\psi_0^+(x)\right)
  \right]\, dx\,.
\end{aligned}
$$
Note that the homogenization coefficient $a_*$ is a scalar due to 
spatial symmetries of $a_+$.  The resulting numerical value for the
above discretization is
\begin{equation}\label{E:astar}
a_* \approx 0.1699\,.
\end{equation}

In our first example we choose the frequency $\omega$ and the incoming
wave $u^{(\text{in})}(x)=e^{\ri j^{(\text{in})}\cdot x}$ with
\begin{equation}\label{E:incom-sm-om}
  \omega =0.2\pi\approx 0.628, \quad j^{(\text{in})}\approx (0.440, 0.449)\,.
\end{equation}
Since the frequency is quite small (and, hence, the wavelength is
large), we can expect that the homogenized setting provides a good
approximation.

The band structure and the level set at $\omega$ is plotted in Figure
\ref{F:comp_hom_bd_str}, where also the points $j$ with
$(j,m)\in I^{+,h}$ are marked by black dots (all very close to each
other). At the selected frequency $\omega=0.2 \pi$ the band structure
is only a small perturbation of a cone, which means that the band
structure is similar to that of a homogenous medium. Indeed,
homogenization theory provides a good qualitative prediction, as shown
in Figure \ref{F:comp_hom_om_sm}.
\begin{figure}[ht!]
  \begin{center}
    \includegraphics[scale=0.6]{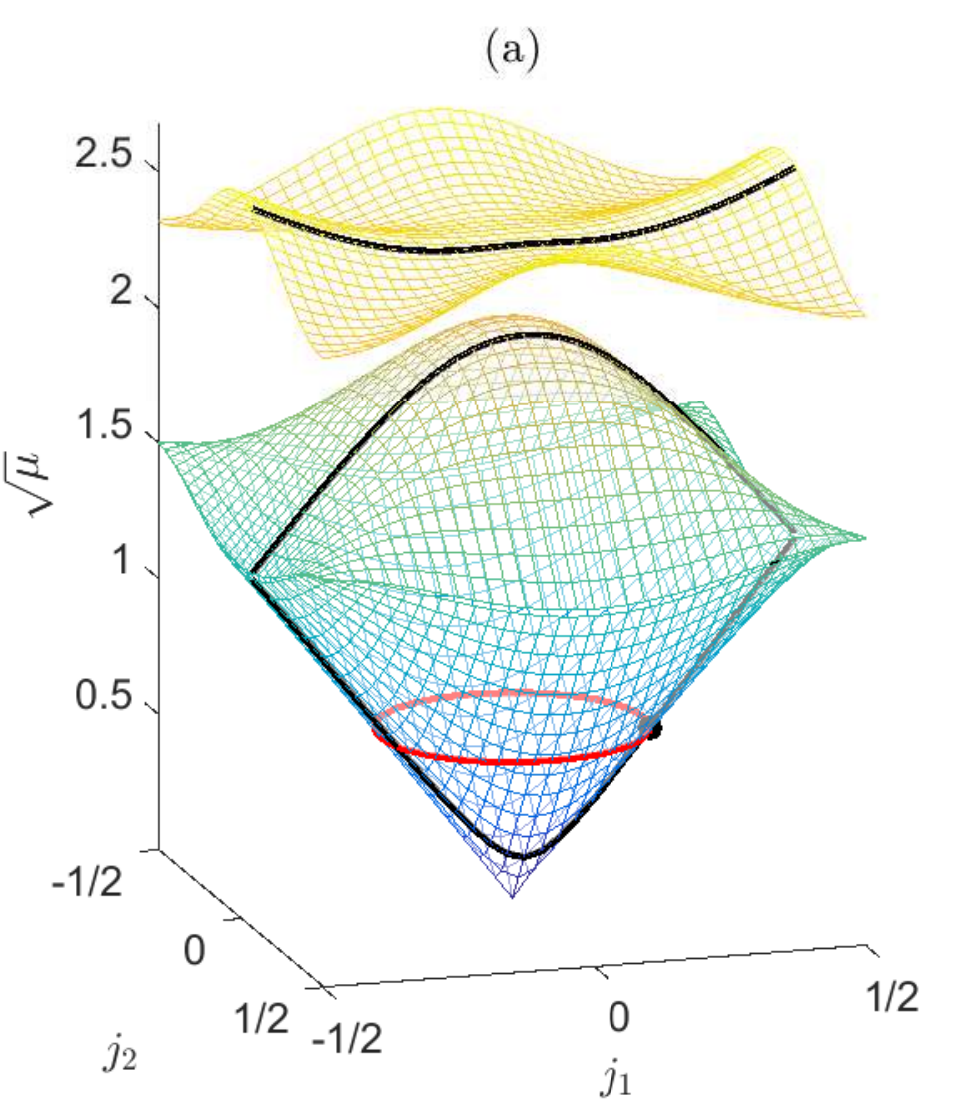}
    \includegraphics[scale=0.6]{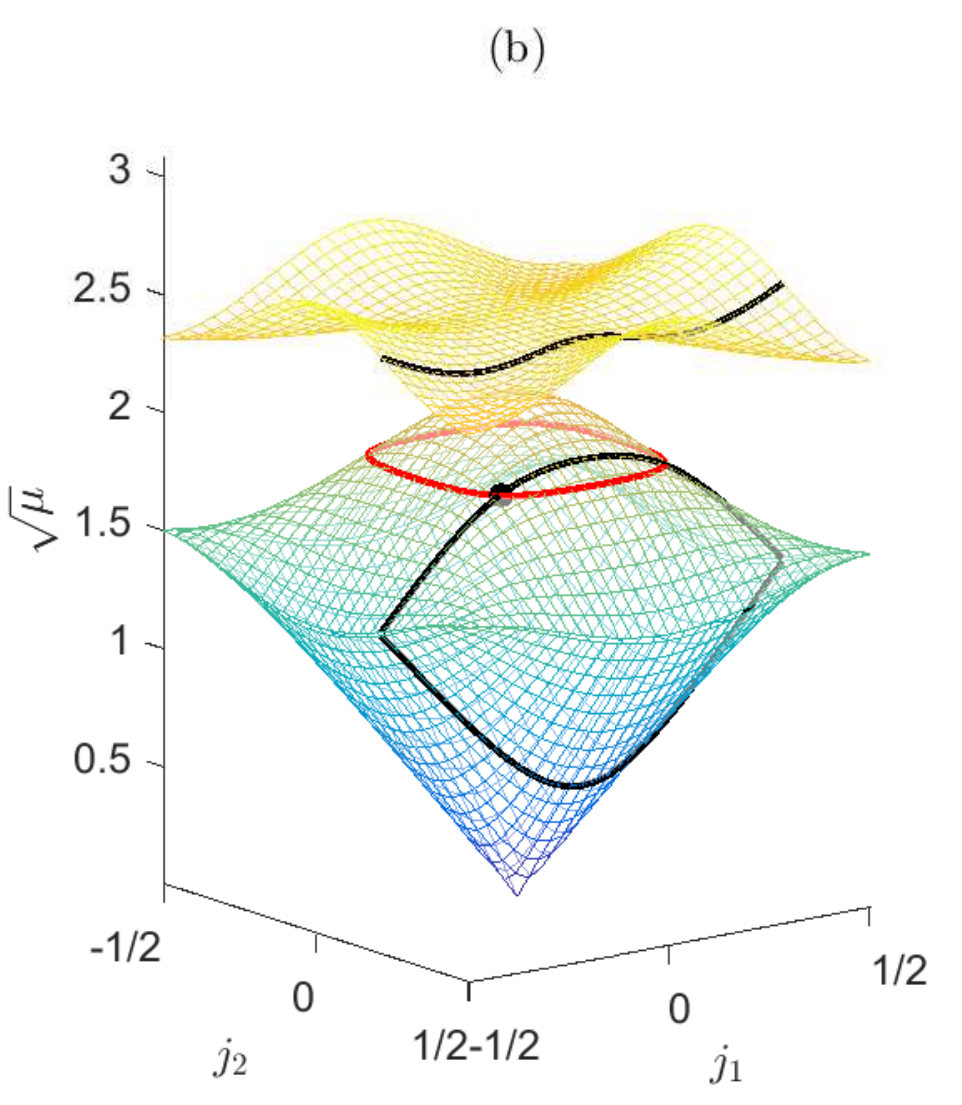}
    \caption{\label{F:comp_hom_bd_str} \small Band structure for $a_+$
      given in \eqref {eq:a+formula}. The three surfaces are the
      graphs of $\sqrt{\mu_m(j)}$ for $m=0,1,2$ (identical in (a) and
      (b)). The red line shows points on the graph that have height
      $\omega$. The black lines show points on the graph that satisfy
      $j_2 = j_2^{(\text{in})}$. The black dots are the
      $(j,\sqrt{\mu})$-coordinates of the ``transmission'' Bloch waves
      $U^{+,h}_{\lambda_i}$, $i=1,\dots,N_{\text{Bl}}$ with
      $\lambda_i\in I^{+,h}$ selected by the algorithm. (a) Situation
      for parameters $\omega$ and $j^{(\text{in})}$ of \eqref
      {E:incom-sm-om}. (b) Situation for the parameters of \eqref
      {E:incom-negref}. }
  \end{center}
\end{figure}
\begin{figure}[ht!]
  \begin{center}
    \includegraphics[scale=0.6]{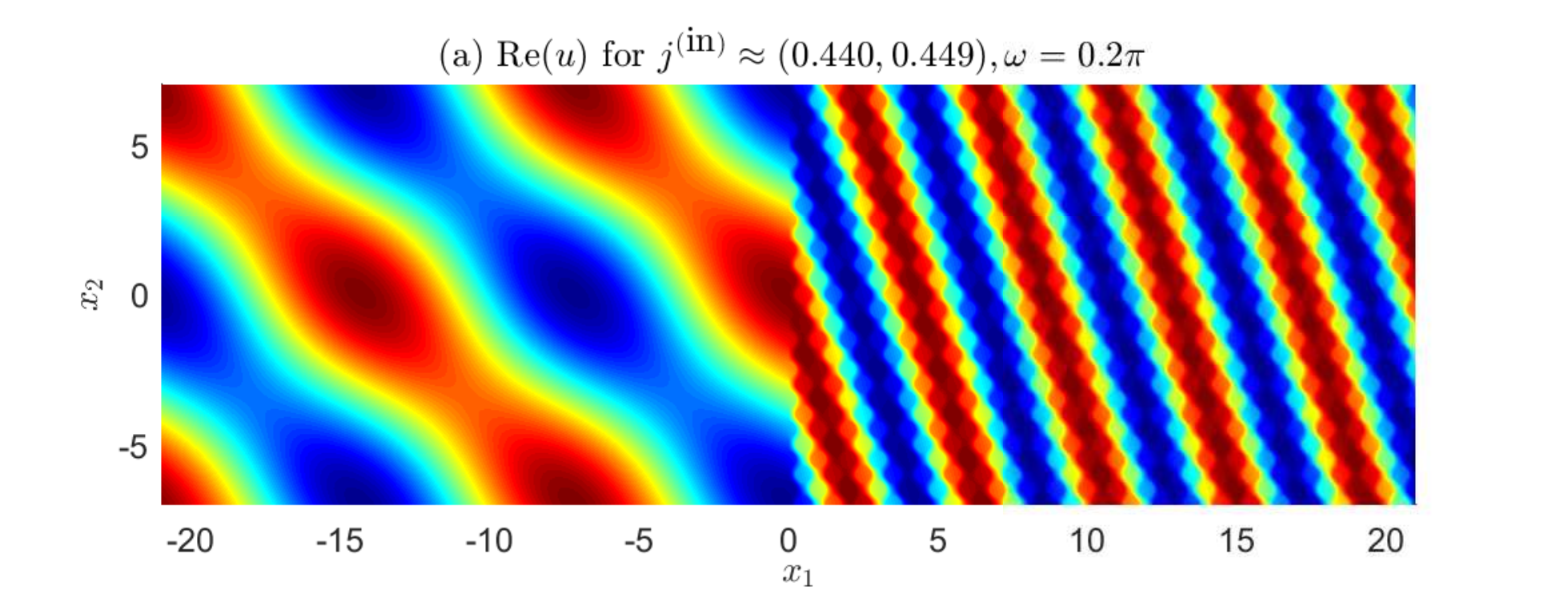}
    \includegraphics[scale=0.6]{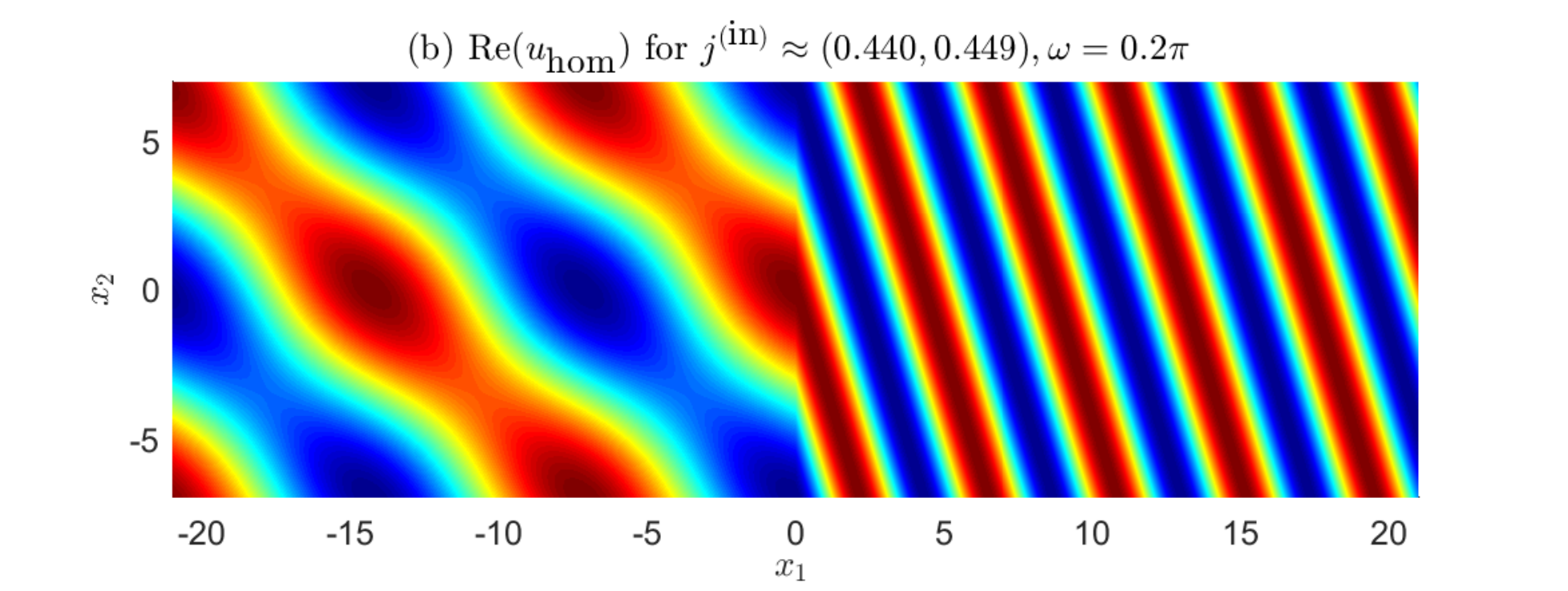}
    \caption{\label{F:comp_hom_om_sm} \small The color-coding shows
      $\text{Re}(u)$ on $\Omega_{R+L}$, where $u$ is the solution
      to an incoming wave given by \eqref{E:incom-sm-om}, the setting
      is that of Section \ref{S:comp_hom}. (a) A heterogeneous medium
      on the right as in \eqref{E:a-Luo}; (b) A homogeneous medium on
      the right with the homogenized coefficient $a_*\approx 0.1699$.}
  \end{center}
\end{figure}

In our second example we choose a frequency that is relatively large:
\begin{equation}\label{E:incom-negref}
  \omega =1.85, \ j^{(\text{in})}\approx (1.269, 1.346).
\end{equation}
The band structure at $\omega$ is far from a conical shape. Indeed,
the prediction of the homogenized model is not in agreement with the
solution for the heterogeneous material, see Figure
\ref{F:comp_hom_om_lg}.
\begin{figure}[ht!]
  \begin{center}
    \includegraphics[scale=0.6]{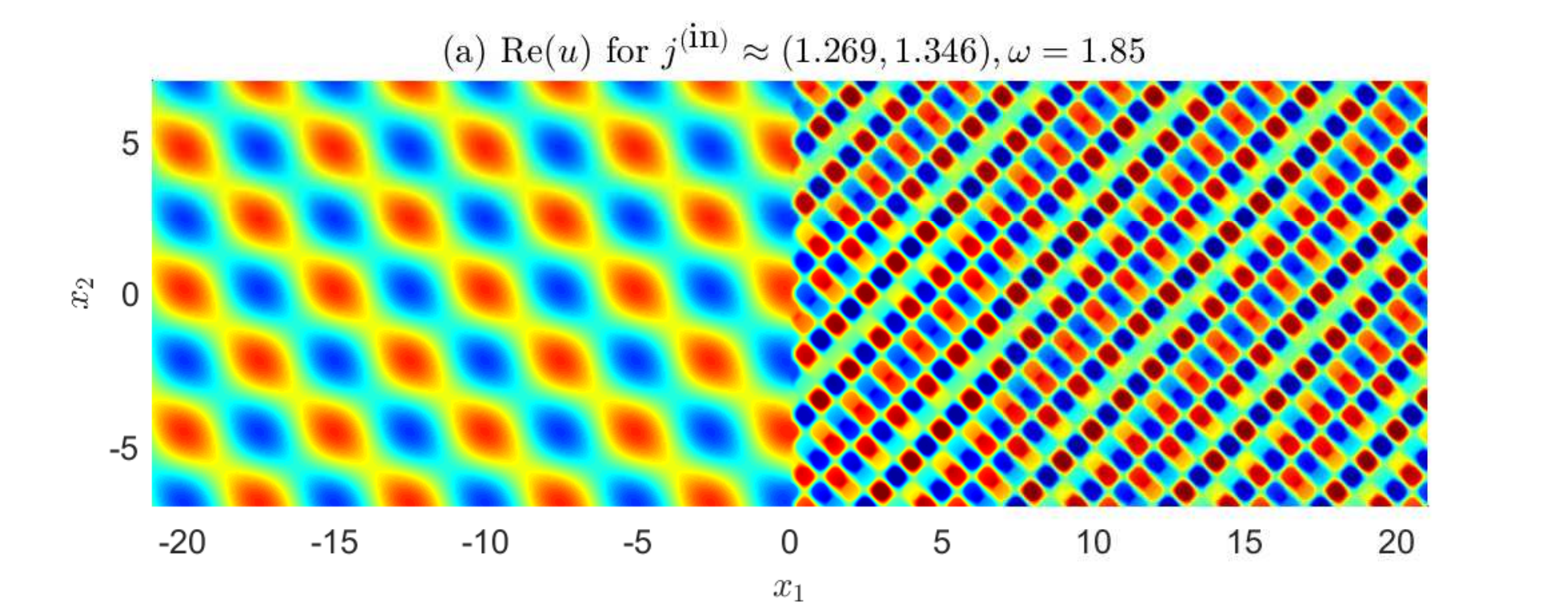}
    \includegraphics[scale=0.6]{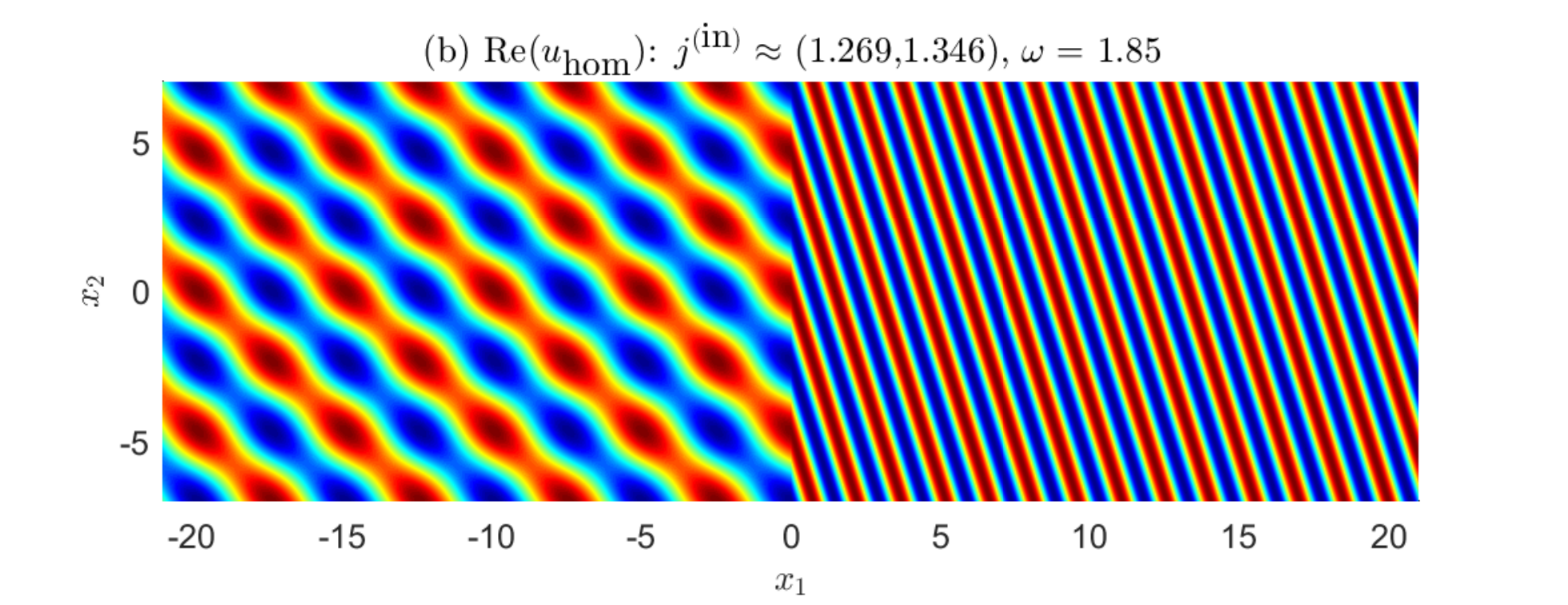}
    \caption{\label{F:comp_hom_om_lg} \small $\text{Re}(u)$ for the
      incoming wave given by \eqref{E:incom-negref} and the setting of
      Section \ref{S:comp_hom}. (a) The interface from
      \eqref{E:a-Luo}; (b) On the right half, $a_+$ is replaced by the
      homogenized coefficient $a_*\approx 0.1699$. One clearly sees
      negative refraction in (a), while homogenization predicts a
      positive refraction in (b).}
  \end{center}
\end{figure}

The parameters in \eqref{E:incom-negref} are chosen to produce
negative refraction. Negative refraction can be deduced from the
negative second component of the group velocity of the ``transmission"
Bloch waves $U^+_\lambda$, $\lambda \in I^+$ with frequency
$\omega$. In this situation, the incoming wave propagates upwards,
while the transmitted wave propagates downwards.  The group velocities
(multiplied by 2 for better visibility) of the ``transmission'' Bloch
waves $U^{+,h}_{\lambda_i}$, $i=1,\dots,N_{\text{Bl}}=4$ with
$\lambda_i\in I^{+,h}$ selected by the algorithm are plotted in Figure
\ref{F:k-vg_sel} (b). For completeness we show in Figure
\ref{F:k-vg_sel} (a) the group velocities of the ``reflected Bloch
waves'' $U^{-,h}_{\lambda_i}$. In both (a) and (b) all four group
velocity arrows lie very close to each other such that only one arrow
is visible.
\begin{figure}[ht!]
  \begin{center}
    \includegraphics[scale=0.6]{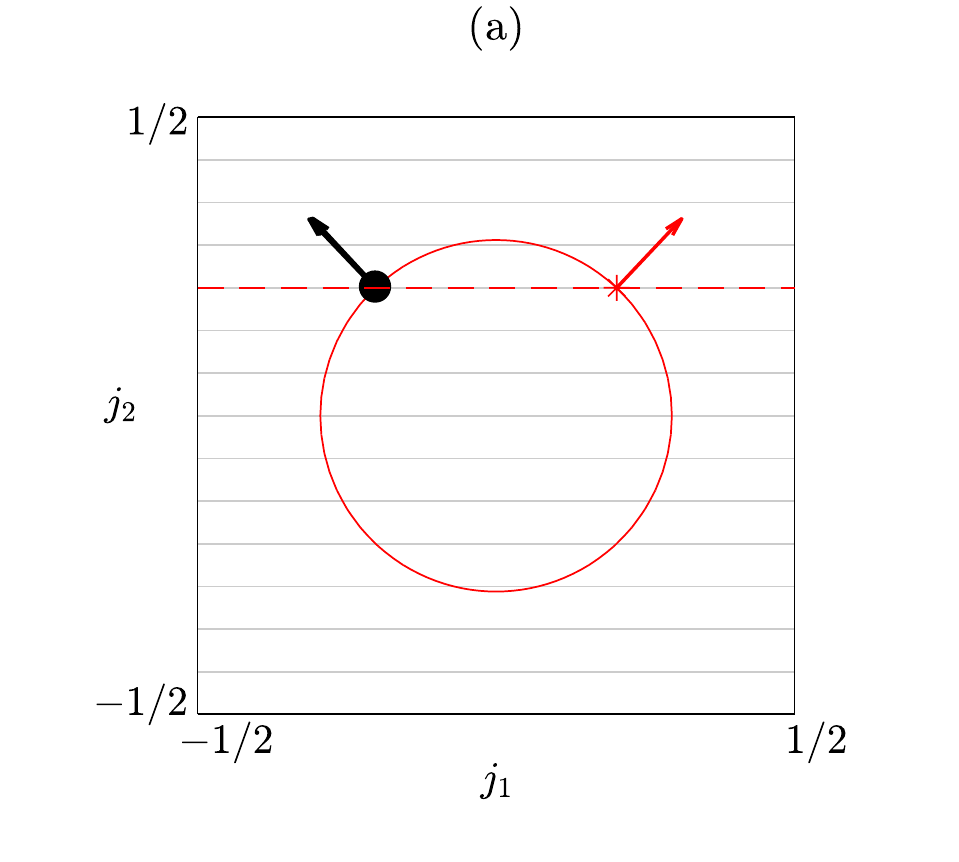}
    \includegraphics[scale=0.6]{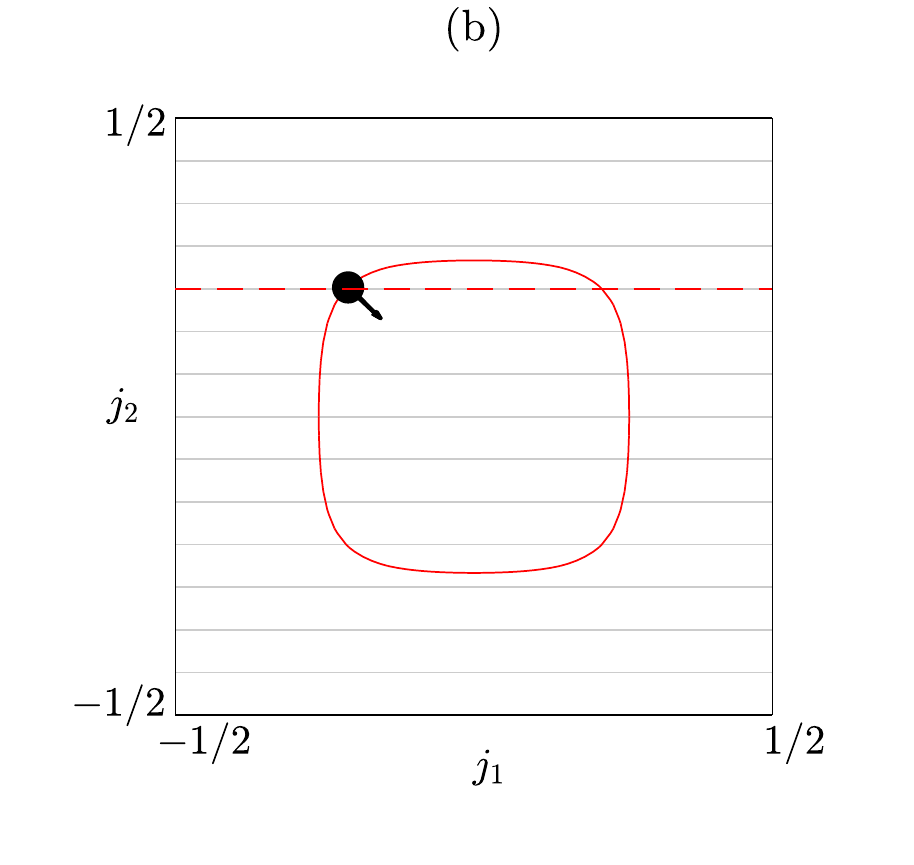}
    \caption{\label{F:k-vg_sel} \small The Brillouin zone
      $\mathbb{B}$. The closed red curves mark the level set of the
      band structure at the level $\omega^2$ given in
      \eqref{E:incom-negref} and with the setting of Section
      \ref{S:comp_hom}. Hence, the Bloch waves at the red points solve
      the Helmholtz equation with $\omega^2$.  The horizontal lines
      mark points that correspond to vertically periodic waves,
      $j_2\in Q'_K$. The arrows show the group velocities for those
      waves that are relevant in the numerical result.  The red arrow
      pointing north-east represents the incoming wave, the dashed
      horizontal line marks its $j_2$ component.  This component of
      $j_2$ is preserved across the interface.  (a) The situation in
      the left medium with the Bloch waves $U^{-,h}_{\lambda_i}$,
      $\lambda_i\in I^{-,h}$. (b) The situation in the right medium
      with the Bloch waves
      $U^{+,h}_{\lambda_i}, \lambda_i\in I^{+,h}$. In (b) the group
      velocities are multiplied by $2$ for better visibility. }
  \end{center}
\end{figure}

\paragraph{Refraction at interfaces between homogeneous media: Snell's
  law and Fresnel formulas.}

For a quantitative test of the numerical method we compute the
analytical solution for an interface separating two homogeneous
media. Here Snell's and Fresnel formulas are available and provide a
reference solution. We choose the same setting as in Figure
\ref{F:comp_hom_om_lg} (b).

For the interface with $a(x)=1$ for $x_1<0$ and $a(x)=a_*>0$ for
$x_1\geq 0$ Snell's law reads
$\sqrt{a_*} =\tfrac{\sin \theta_+}{\sin \theta_-}$, where
$\tfrac{1}{\sqrt{a_*}}$ is the refractive index of the material on
$x_1\geq 0$ and $\theta_+,\theta_-$ are the angles between
$j^{(\text{out})},j^{(\text{in})}$ and the horizontal axis,
respectively. Here $j^{(\text{out})}$ is the wavevector of the
transmitted wave.  For a straight vertical interface it is 
$j^{(\text{out})}_2=j^{(\text{in})}_2$, we therefore find
$\tfrac{\sin \theta_+}{\sin
  \theta_-}=\tfrac{|j^{(\text{in})}|}{|j^{(\text{out})}|}$. In
summary, Snell's law for this setting is
\begin{equation}\label{E:Snell}
  \sqrt{a_*}=\frac{|j^{(\text{in})}|}{|j^{(\text{out})}|}\,.
\end{equation}
Fresnel's formulas can be derived from the continuity of $u$ and
$\partial_{x_1}u$ across the interface. Writing
$$
u(x)=\begin{cases}
  e^{\ri j^{(\text{in})}\cdot x} + R e^{\ri (-j_1^{(\text{in})}x_1+j_2^{(\text{in})}x_2)}, \quad & x_1<0,\\
  Te^{\ri j^{(\text{out})}\cdot x}, \quad & x_1\geq 0\,,\\
\end{cases}
$$
we obtain
\begin{equation}\label{E:Fresnel}
  R=\frac{j_1^{(\text{in})}-a_*j_1^{(\text{out})}}{j_1^{(\text{in})}+a_*j_1^{(\text{out})}}\,,
  \quad\text{and}\quad
  T = 1+R\,.
\end{equation}
Given $j^{(\text{in})}$ and $a_*$, \eqref{E:Snell} and the equation
$j^{(\text{out})}_2=j^{(\text{in})}_2$ determine $j^{(\text{out})}$,
hence $R$ and $T$ can be evaluated from \eqref{E:Fresnel}. We compare
these values with the numerical ones. In the numerical results, we
interpret the wavevector of the dominant Bloch wave in $I^{+,h}$ as
the vector $j^{(\text{out})}$. We denote the correspoding index in
$I^{+,h}$ by $\lambda_\text{out}$.
Similarly, we denote by $\lambda_\text{refl}$ the index of the
dominant Bloch wave in $I^{-,h}$. The coefficients $R$ and $T$ are
approximated by the coefficient of the basis functions
$U^{-,h}_{\lambda_\text{refl}}$ and $U^{+,h}_{\lambda_\text{out}}$,
respectively (after renormalizing $U^{-,h}_{\lambda_\text{refl}}$ and
$U^{+,h}_{\lambda_\text{out}}$ such that
$\|U^{-,h}_{\lambda_\text{refl}}\|_{L^2([0,1]^2)}=\|U^{+,h}_{\lambda_\text{out}}\|_{L^2([0,1]^2)}=1$). We
denote these coefficients $\alpha_\text{refl}$ and
$\alpha_\text{out}$, respectively.

We use the incoming field as in Figure \ref{F:comp_hom_om_lg} (b),
i.e. that given in \eqref{E:incom-negref}. Discretizing with
$h_1=0.05$ and $h_2\approx 0.0526$, we get $a_*\approx 0.1699$
($\sqrt{a_*}\approx 0.4122$) and
$\frac{|j^{(\text{in})}|}{|j^{(\text{out})}|} \approx 0.414383$. This
approximation improves when the FEM-discretization is refined. In
Figure \ref{F:RT_conv} we plot the errors $|R-|\alpha_\text{refl}||$
and $|T-|\alpha_\text{out}||$ for the absorption parameter values
$\delta=10^{-p}, p=2,3,4,5,6$. For $\delta$ even smaller the errors do
not decrease due to the dominance of the discretization error. By
refining the discretization, the error for $\delta<10^{-6}$ can be
made smaller.
\begin{figure}[ht!]
  \begin{center}
    \includegraphics[scale=0.5]{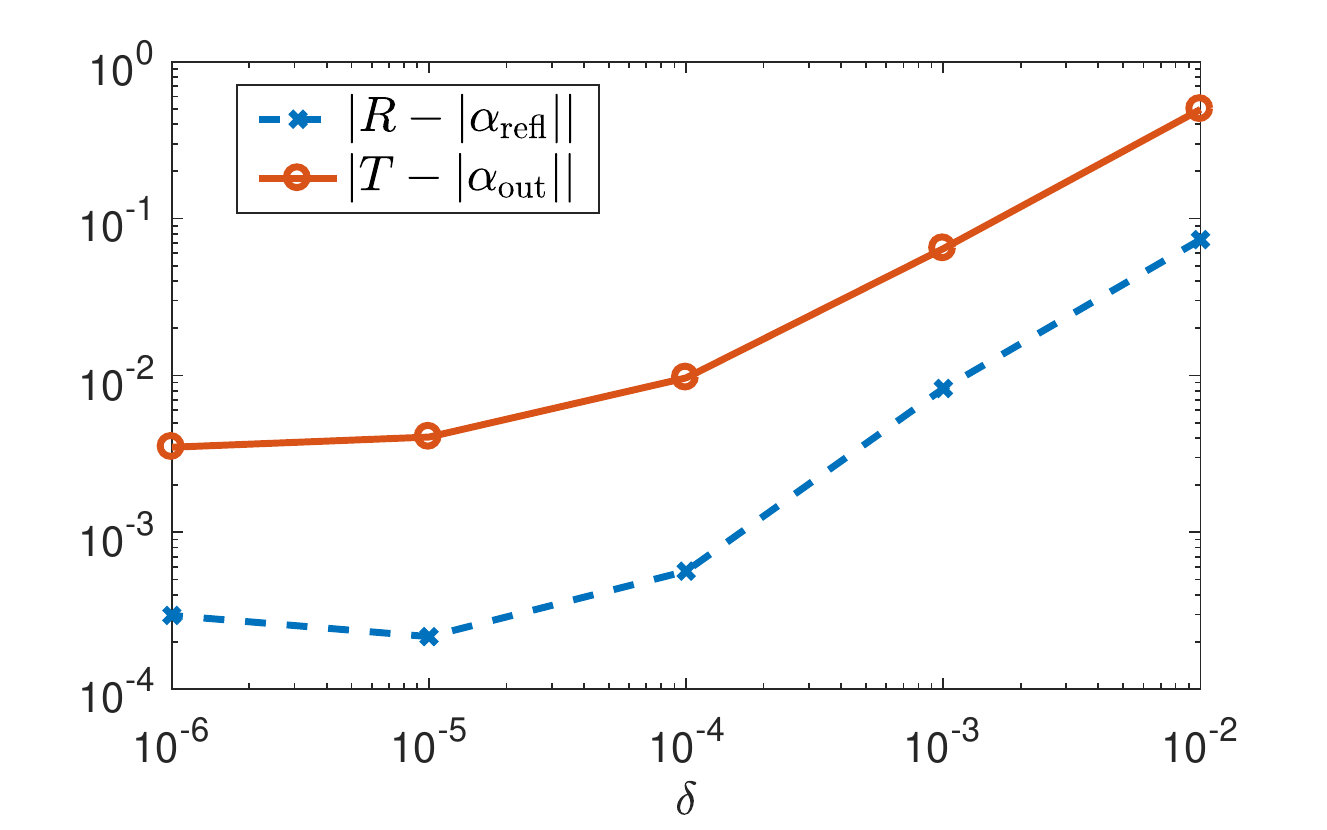}
    \caption{\label{F:RT_conv} \small Convergence of the error in the
      reflection and transmission coefficients $R$ and $T$ with
      respect to $\delta$.}
  \end{center}
\end{figure}

\subsection{Numerical results II: Scattering and negative refraction
  with a localized source}

In these tests we consider the same interface as in Section
\ref{S:comp_hom}. Instead of an incoming field we investigate a
spatially localized source; we choose
\begin{equation}\label{E:source}
f(x)=2e^{-3|x-x_*|^2}, \ x_*=(-3.5,0).
\end{equation}
This source generates waves in all directions, hence a relatively
large number $N_\text{Bl}$ of Bloch basis functions is needed.  We
consider a vertically wide domain in order for the periodic boundary
conditions to have a smaller effect near the source location. The
domain is given by $H=100$, $\eps=1$, $\eps R=45$, $\eps L =15$, and
we set $N_\text{Bl} = 180$. We choose again $\omega=1.85$. Figure
\ref{F:source} shows the solution modulus for $h_1 = h_2 =0.0625$. In
the crystal, close to the interface, the field is clearly focused in a
strip near the central line. We interpret that this effect is
generated by the negative refraction at the selected frequency.

In Figure \ref{F:source-ks} we plot once more the group velocities of
the Bloch waves $U^{-,h}_{\lambda_i}$, $\lambda_i \in I^{-,h}$ in (a)
and of $U^{+,h}_{\lambda_i}, \lambda_i \in I^{+,h}$ in (b). The size
of the dots at the foot of each arrow is proportional to the relative
modulus of the coefficient of the Bloch wave $U^{-,h}_{\lambda_i}$ in
(a) and $U^{+,h}_{\lambda_i}$ in (b). The strength of each Bloch wave
in the solution $u$ is thus visualized. Note that the large vertical
size $H$ leads to a large set $Q'_K$, which explains the large number
of gray horizontal lines in Figure \ref{F:source-ks}.
\begin{figure}[ht!]
  \begin{center}
    \def\big{\includegraphics[scale=0.75]{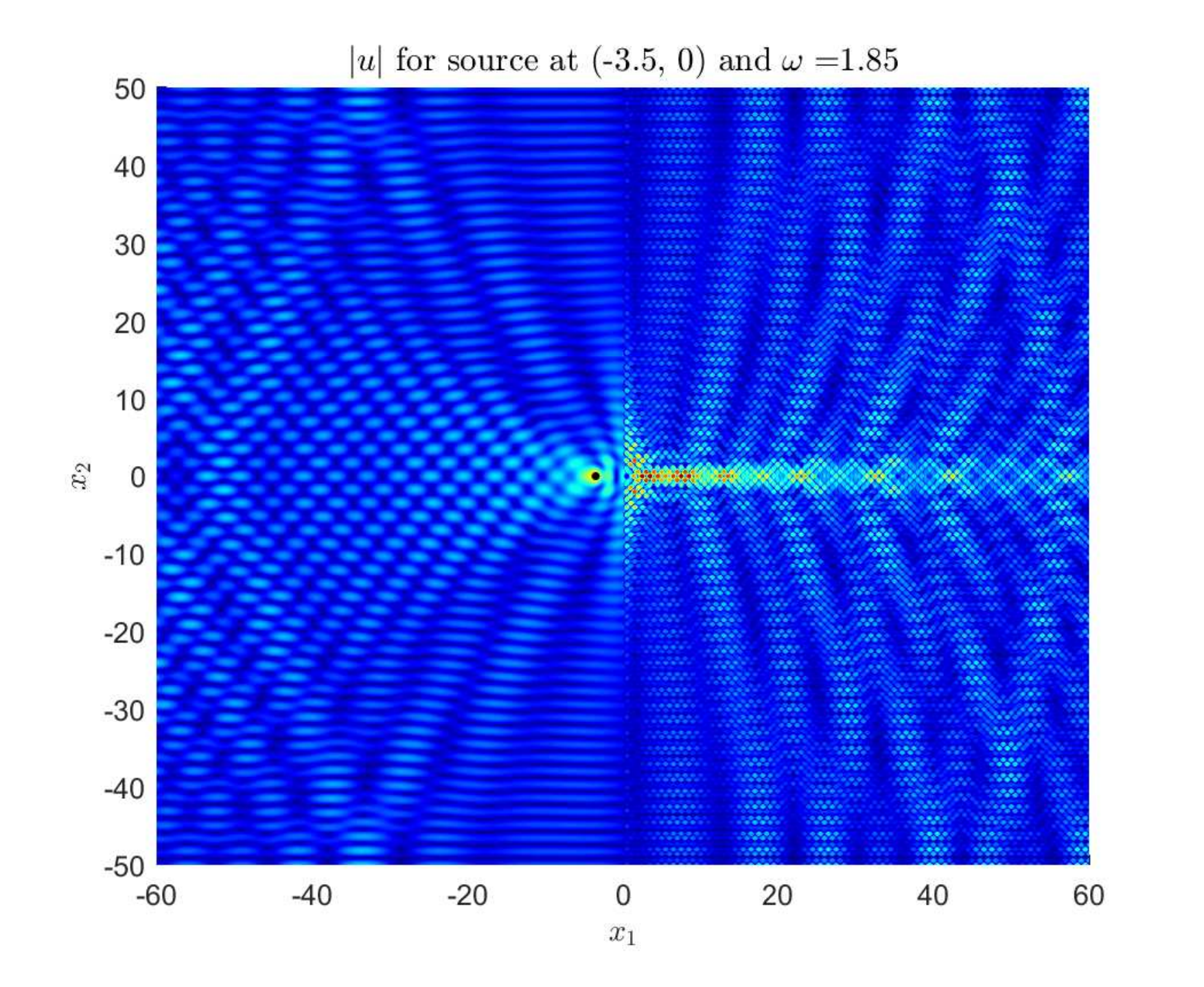}}
    \bottominset{\includegraphics[scale=0.4]{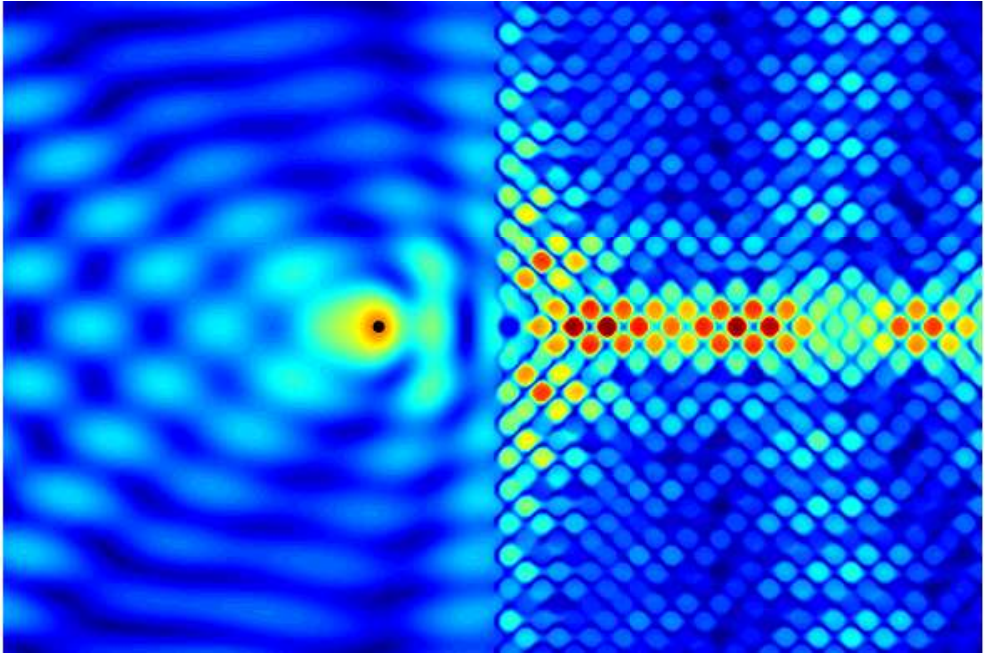}}{\big}{40pt}{-60pt}
    \caption{\label{F:source} \small $|u|$ for the scattering problem
      with the Gaussian source \eqref{E:source}. The interface is as
      in \eqref{E:a-Luo}, $\omega=1.85$ and $N_\text{Bl}=180$, $\eps
      R=45$, $\eps L =15$, $\eps=1$, $H=100$, $h_1=h_2=0.0625$. The
      inset zooms in on the vicinity of the center (near the source
      location).}
  \end{center}
\end{figure}

\begin{figure}[ht!]
  \begin{center}
    \includegraphics[scale=0.6]{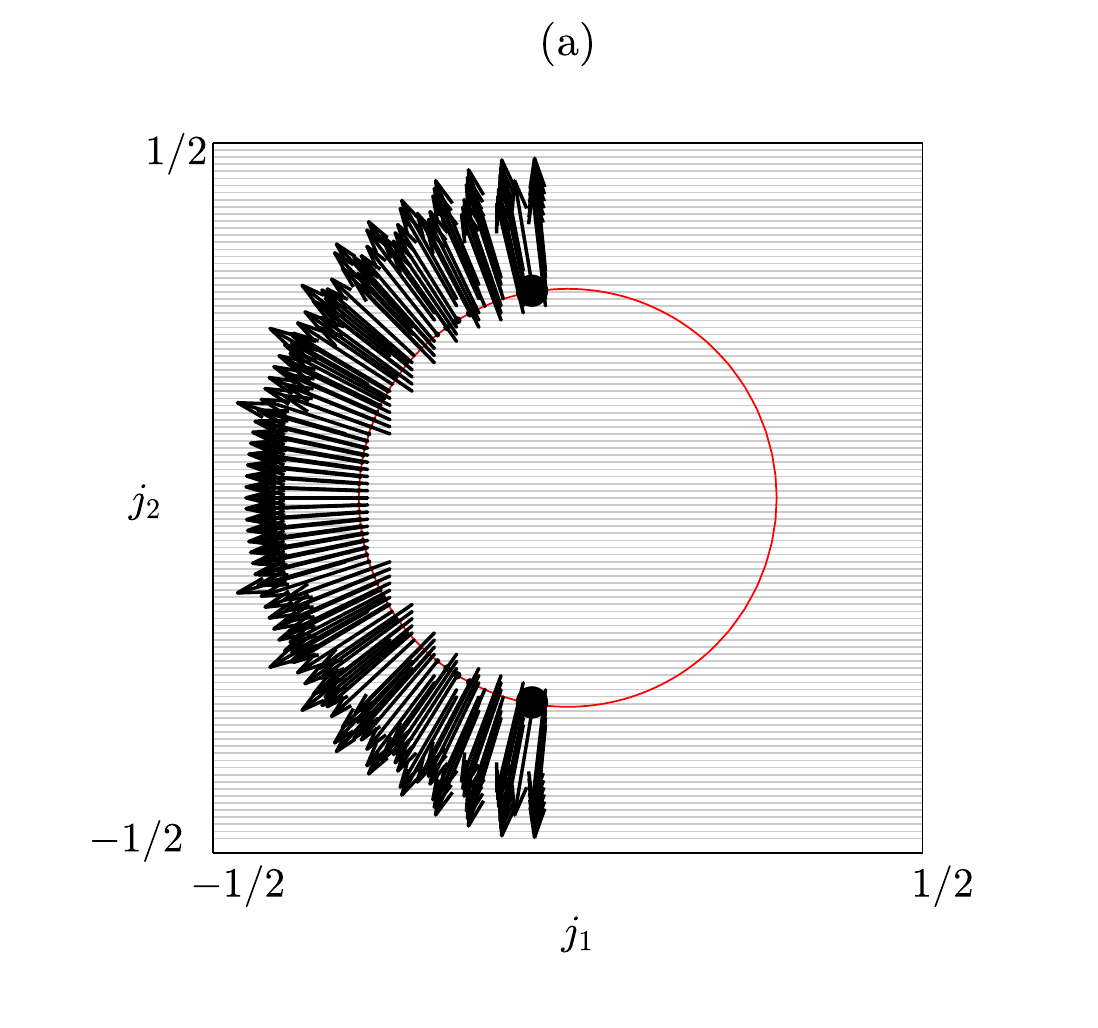}
    \includegraphics[scale=0.6]{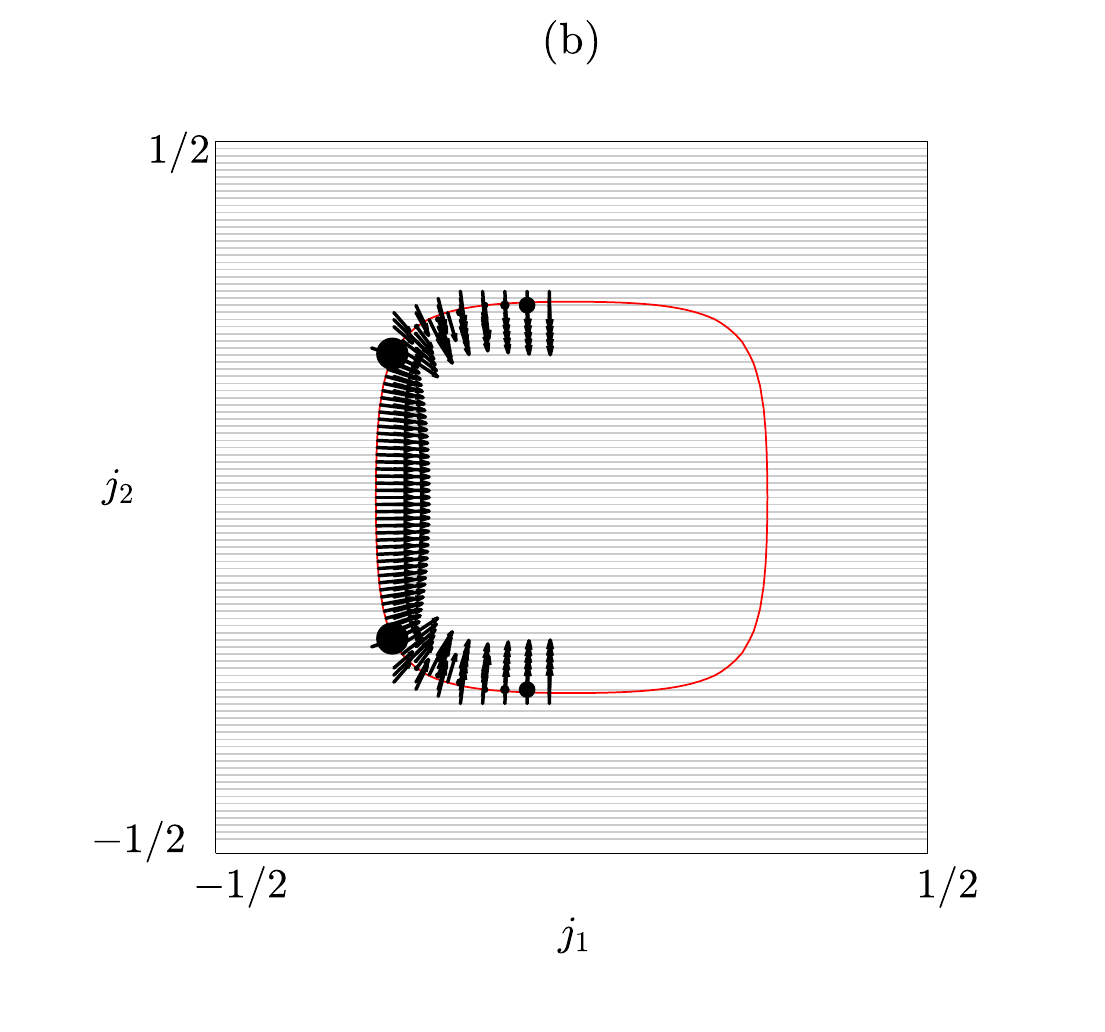}
    \caption{\label{F:source-ks} \small The Brillouin zone for the
      scattering problem of Figure \ref{F:source}, the symbols are as
      in Figure \ref{F:k-vg_sel}, (a) showing the left medium and (b)
      the right medium.  In contrast to the case of a single incoming
      wave, the numerical solution now uses many different Bloch
      waves, indicated by the arrows. Since all waves in $I^{\pm,h}$
      are outgoing, all arrows point to the left in the left medium
      and to the right in the right medium. The size of the dots at
      the foot of each arrow is porportional to the modulus of the
      coefficient of each Bloch wave.}
  \end{center}
\end{figure}

In order to confirm the lensing effect of a crystal for frequencies
with negative refraction, we use the same source as above but truncate
the crystal after 10 horizontal periods, see Figure
\ref{F:source_fin_crys}. With the frequency
$\omega =0.2\pi\approx 0.628$, where the refraction is positive, no
focusing occurs; contrastingly, at $\omega=1.85$, a focused image is
seen on the right side of the crystal. This confirms findings of
\cite{PhysRevB.65.201104}. We use here $\eps=1$, $\eps R=27$,
$\eps L =9$, $H=60$, $h_1\approx 0.0769$, $h_2\approx 0.0714$, and
$N_\text{Bl}=80.$
\begin{figure}[ht!]
  \begin{center}
	\includegraphics[scale=0.51]{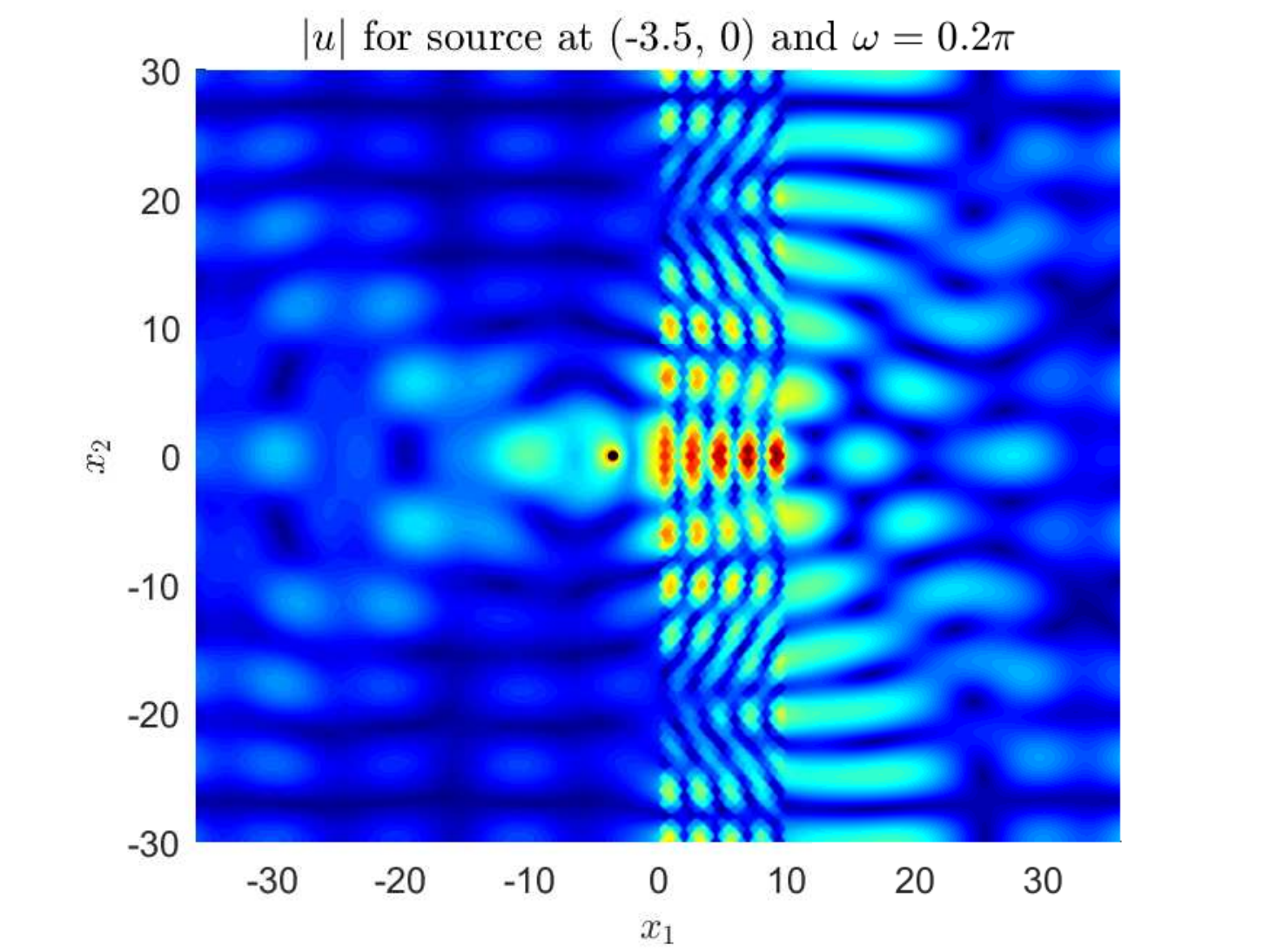}\hspace{-.6cm}
        \includegraphics[scale=0.51]{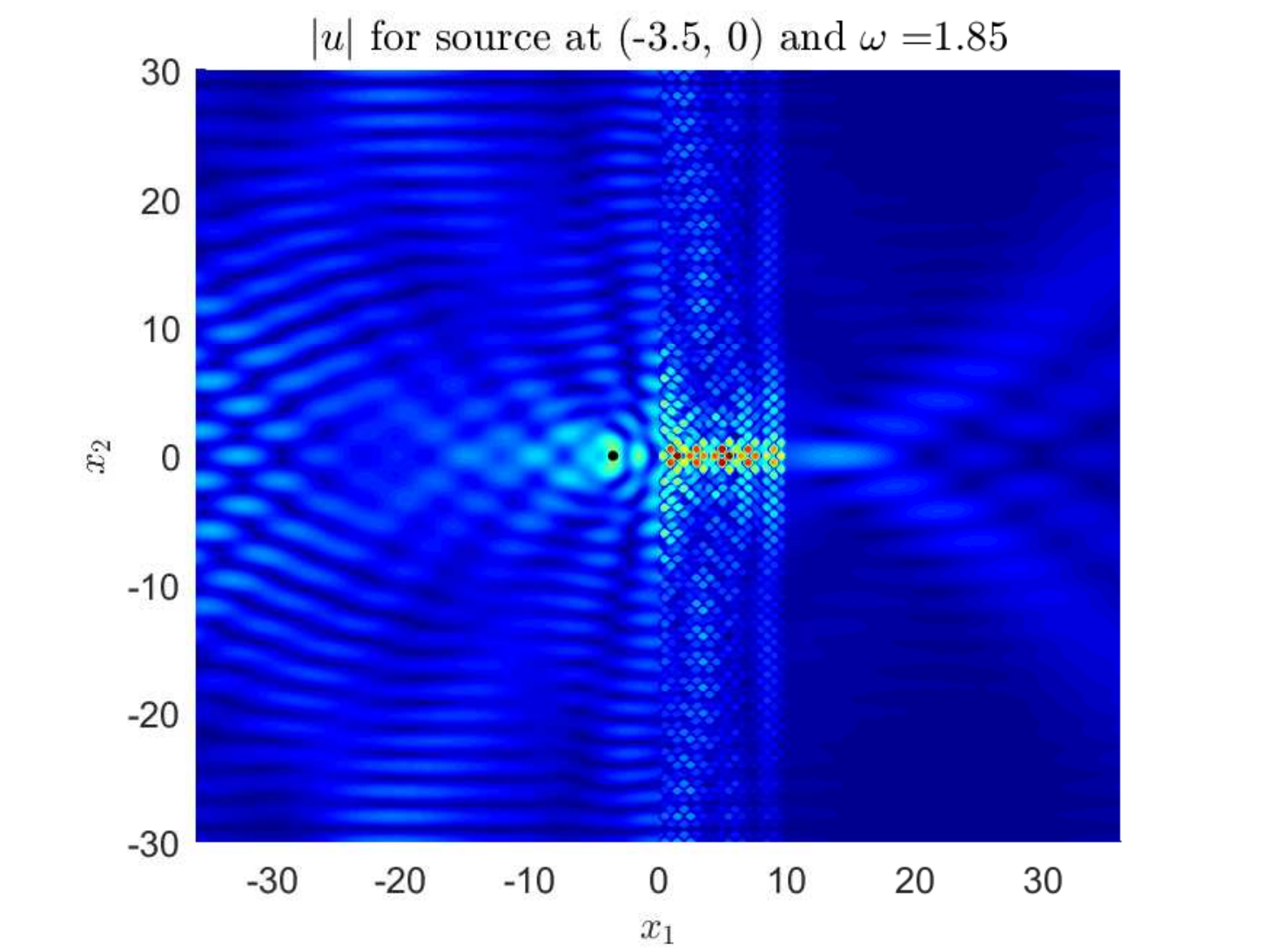}
        \caption{\label{F:source_fin_crys} \small $|u|$ for the
          scattering problem with the Gaussian source \eqref{E:source}
          and a crystal of thickness $10 \eps$, $\eps =1$. The
          material in $x_1\in [0,10\eps]$ is given by $a_+$ in \eqref
          {eq:a+formula} and by $1$ in $x_1\notin [0,10\eps]$. We use
          $N_\text{Bl}=80,$ $H=60$, $\eps R=27$, $\eps L =9$, and
          $h_1\approx 0.0769$, $h_2\approx 0.0714$. Left: Frequency
          $\omega=0.2\pi\approx 0.628$ leading to a positive
          refraction. Right: Frequency $\omega=1.85$ with a negative
          refraction and a resulting lensing effect.}
  \end{center}
\end{figure}

\subsection*{Acknowledgements}

Support of the first author by the DFG grant DO1467/3-1 and of the second author by the DFG grant Schw 639/6-1 is gratefully
acknowledged.

\bibliographystyle{abbrv} 
\bibliography{lit-Helmholtz-4}

\end{document}